\pgfplotsset{compat=1.15}
\newcommand{\algo}[2]{
    \begin{tcolorbox}[breakable,colback=black!5!white,colframe=black,title={#1}]
        #2
    \end{tcolorbox}
}
\newtheorem{thm}{Theorem}[section]
\newtheorem*{thm*}{Theorem}
\newtheorem{lemme}[thm]{Lemma}
\newtheorem{prop}[thm]{Proposition}
\theoremstyle{definition}
\newtheorem{defi}[thm]{Definition}
\theoremstyle{remark}
\newtheorem{nota}[thm]{Notation}
\newtheorem{conj}[thm]{Conjecture}
\newtheorem{rem}[thm]{Remark}
\newtheorem{ex}[thm]{Example}
\newcommand{\abs}[1]{\left\lvert#1\right\rvert}
\newcommand{\acts}{\curvearrowright}
\DeclareMathOperator{\Stab}{Stab}
\DeclareMathOperator{\ev}{ev}
\DeclareMathOperator{\GL}{GL}
\DeclareMathOperator{\PGL}{PGL}
\DeclareMathOperator{\Tr}{Tr}
\DeclareMathOperator{\Orb}{Orb}
\DeclareMathOperator{\PSL}{PSL}
\DeclareMathOperator{\e}{e}
\DeclareMathOperator{\SL}{SL}
\newcommand{\RR}{\mathbb{R}}
\newcommand{\CC}{\mathbb{C}}
\newcommand{\NN}{\mathbb{N}}
\newcommand{\ZZ}{\mathbb{Z}}
\newcommand{\QQ}{\mathbb{Q}}
\newcommand{\PP}{\mathbb{P}}
\newcommand{\cO}{\mathcal{O}}
\newcommand{\cQ}{\mathcal{Q}}
\newcommand{\cN}{\mathcal{N}}
\begin{document}

\title{Burau representation of $B_4$ and quantization of the rational projective plane}
\author{Perrine Jouteur}
\date{}

\address{
Perrine Jouteur,
Laboratoire de Math\'ematiques de Reims, UMR~9008 CNRS et
Universit\'e de Reims Champagne-Ardenne,
U.F.R. Sciences Exactes et Naturelles,
Moulin de la Housse - BP 1039,
51687 Reims cedex 2,
France} 
\email{perrine.jouteur@univ-reims.fr}

\begin{abstract}
The braid group $B_4$ naturally acts on the rational projective plane $\PP^2(\QQ)$, this action corresponds to
the classical integral reduced Burau representation of~$B_4$.
The first result of this paper is a classification of the orbits of this action.
The Burau representation then defines an action of~$B_4$ on $\PP^2(\ZZ(q))$, where $q$ is a formal parameter
and $\ZZ(q)$ is the field of rational functions in~$q$ with integer coefficients.
We study orbits of the $B_4$-action on $\PP^2(\ZZ(q))$, and show existence of embeddings of
the $q$-deformed projective line~$\PP^1(\ZZ(q))$ that precisely correspond to the
notion of $q$-rationals due to Morier-Genoud and Ovsienko.
\end{abstract}

\maketitle
\vspace{-0.5 cm}
\tableofcontents

\newpage

\section{Introduction and main results}
  
The $4$-strands Artin braid group $B_4$ is generated by three elements $\sigma_1, \sigma_2, \sigma_3$ with braid relations
$$
\sigma_1\sigma_{2}\sigma_1=\sigma_{2}\sigma_{1}\sigma_{2}, \quad \sigma_{2}\sigma_{3}\sigma_{2}=\sigma_3\sigma_{2}\sigma_3,
$$
and commutation relation $\sigma_1\sigma_3=\sigma_3 \sigma_1$.

The classical reduced \textit{Burau representation} of $B_4$ is a group homomorphism
$$
\rho_q : B_4 \rightarrow \GL_3(\Lambda),
$$
where $ \Lambda := \ZZ[q,q^{-1}]$ is the ring of Laurent polynomials in one (formal) variable $q$ with integer coefficients, 
defined by
\begin{equation}
\label{Burau}
\rho_q(\sigma_1) = \begin{pmatrix}
							q & 1 & 0 \\
							0 & 1 & 0 \\
							0 & 0 & 1\\
							\end{pmatrix}, 
\qquad
\rho_q(\sigma_2) = \begin{pmatrix}
							1 & 0 & 0 \\
							-q & q & 1 \\
							0 & 0 & 1\\
							\end{pmatrix}, 
\qquad
\rho_q(\sigma_3) = \begin{pmatrix}
											1 & 0 & 0 \\
											0 & 1 & 0 \\
											0 & -q & q\\
											\end{pmatrix}.
\end{equation}											
Note that, for the sake of convenience and following \cite{Dlugie2022,morier-genoud_burau_2024} we have chosen the parameter $q=-t$, where~$t$ is
a more standard choice of parameter used in the theory of braid groups. 

The Burau representation goes back to Werner Burau~\cite{Burau} who 
used it to interpret the Alexander polynomial  of knots in algebraic terms.
Faithfulness of the representation~\eqref{Burau} is a long standing open problem.
For more details about the Burau representation, see~\cite{Birman,kassel_braid_2008}.

The main goal of this paper is to study the natural projective version of the Burau representation,
which is the action of $B_4$ on the projective plane $\PP^2(\bar{\Lambda})$
with coefficients in the field $\bar{\Lambda} := \ZZ(q)$.
Recall that the field $\bar{\Lambda}$ is the same as the field $\QQ(q)$ of rational functions in~$q$ and every $F(q)\in\bar{\Lambda}$ can be written in the form
$$
F(q)=\frac{R(q)}{S(q)},
$$
where $R$ and $S$ are polynomials in $q$ with integer coefficients.
The action of $B_4$ on $\PP^2(\overline{\Lambda})$ is defined as the projectivization of~\eqref{Burau}. We will still denote by $\rho_q$ this projective version of the Burau representation
$$\rho_q : B_4 \longrightarrow \PGL_3(\Lambda).$$
We understand this action as $q$-deformation, or ``quantization'' of the rational projective plane.
Our approach is similar to that of~\cite{MGO} where the case of the projective line was investigated.

\subsection{The case $q=1$, classification of orbits}

In the special case $q=1$, the homomorphims~\eqref{Burau} is the \textit{integral} Burau representation, 
$$
\rho: B_4 \rightarrow \SL(3,\ZZ),
$$
defined for the generators by
$$
\label{Burau1}
\rho(\sigma_1) = \begin{pmatrix}
							1 & 1 & 0 \\
							0 & 1 & 0 \\
							0 & 0 & 1\\
							\end{pmatrix}, 
\qquad
\rho(\sigma_2) = \begin{pmatrix}
							\,1 & 0 & 0 \\
							-1 & 1 & 1 \\
							\,0 & 0 & 1\\
							\end{pmatrix}, 
\qquad
\rho(\sigma_3) = \begin{pmatrix}
											1 &\, 0 & 0 \\
											0 &\, 1 & 0 \\
											0 & -1 & 1\\
											\end{pmatrix}.
$$										
Note that, unlike the Burau representation $\rho_q$ for which the question is wide open, 
it is known that the integral representation~$\rho$ has a nontrivial kernel.
The kernel of $\rho$  is a normal subgroup of $B_4$, called a \textit{braid Torelli group} and denoted by $\mathcal{B}\mathcal{I}_4$. 
Smythe in \cite{Smythe1979} found a set of normal generators of $\mathcal{B}\mathcal{I}_4$, i.e. a set of elements whose normal closure is $\mathcal{B}\mathcal{I}_4$. Smythe's set of normal generators of $\mathcal{B}\mathcal{I}_4$ is $\{ \tau_1^2, \tau_3^2, \Delta^2\}$, where
\begin{equation}
\label{GenKer}
\tau_1 = (\sigma_1\sigma_2\sigma_1)^2 ~\text{ , } \tau_3 = (\sigma_3\sigma_2\sigma_3)^2 \text{ and } \Delta = \sigma_1\sigma_2\sigma_3\sigma_1\sigma_2\sigma_1. 
\end{equation}

\noindent Note that $\Delta$ is the \textit{Garside element}, whose square $\Delta^2=(\sigma_1\sigma_2\sigma_3)^4 $ generates the center of $B_4$.

More recently Brendle, Margalit, and Putman gave a topological description of normal generators of $\mathcal{B}\mathcal{I}_n$ for all $n$, in \cite{Brendle_2014}.

The identification of the kernels of specializations of Burau representation at roots of unity was raised in \cite{Squier_1984} 
and developped in \cite{Dlugie2022}.

The projective version of $\rho$ gives rise to an action of $B_4$ on the rational projective plane $\PP^2(\QQ)$ that by slightly abusing the notation we also denote by $\rho$
$$
\rho : B_4 \rightarrow\PSL_3(\ZZ) \acts \PP^2(\QQ).
$$ 
Recall that the group $\PSL_3(\ZZ)$ actually coincides with $\SL_3(\ZZ)$.

Let us describe the action of $B_4$ on $\PP^2(\QQ)$ more explicitly.
Every point $p\in\PP^2(\QQ)$ has integer homogeneous coordinates:
$$
p=[r:s:t],
$$
where $r,s,t\in\ZZ$ are mutually prime. 
Every point has exactly two such representatives, that differ by the sign. 
The $\SL_3(\ZZ)$-action preserves this convention, and so does the $B_4$-action we are interested in. 
The $B_4$-action on $\PP^2(\QQ)$ is then
\begin{equation} 
\label{eq:1}
\begin{array}{rccl}
\rho(\sigma_1): & [r:s:t] & \mapsto & [r+s : s: t],\\[4pt]
\rho(\sigma_2): & [r:s:t] & \mapsto & [r:s+t-r:t],\\[4pt]
\rho(\sigma_3): & [r:s:t] & \mapsto & [r:s:t-s].
\end{array}
\end{equation}
While the group $\SL_3(\ZZ)$ acts transitively on $\PP^2(\QQ)$, 
the action of the braid group $B_4$ does not have this transitivity property. 
For example, the point $[1:0:1]$ is fixed by $\rho$ and constitutes the only orbit consisting in one point. 

Our first main result is a complete description of the orbits in $\PP^2(\QQ)$ for the $B_4$-action~\eqref{eq:1}.
Despite the fact that the question has a classic nature, we did not find this statement in the literature.

\begin{thm} 
(i)
\label{decomposition_orbits}
Under the $B_4$-action,
the rational projective plane is decomposed into infinitely many orbits as follows
$$
\PP^2(\QQ) = \{[1 : 0 : 1]\} ~\sqcup ~
\Orb_{B_4}([0:1:0]) ~\sqcup ~
\bigsqcup_{n \geq 2, 0 < m < n/2 \atop m\wedge n = 1} \Orb_{B_4}([m:n:m]).
$$

(ii)
For every couple $(m,n)$ of coprime integers, the orbit  $\Orb_{B_4}([m:n:m])$ consists in the following points
$$
\Orb_{B_4}([m:n:m]) =
 \left\{[r:s:t] ~\Big\vert~ 
 \begin{cases}
 \gcd(r-t,s) = 
 n ;\\
 r,t\equiv \pm m \!\!\! \mod(n).
 \end{cases}
 \right\},
$$
\noindent and
$$
\Orb_{B_4}([0:1:0]) = \left\{[r:s:t] ~|~ \gcd(r-t,s) = 1 \right\}.
$$
\end{thm}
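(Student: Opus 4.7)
First I check that $n([r:s:t]):=\gcd(r-t,s)$ is preserved by each generator. Direct inspection of the formulas in (\ref{eq:1}) shows that $\sigma_1$ and $\sigma_3$ leave $s$ fixed while shifting $r-t$ by $\pm s$, whereas $\sigma_2$ leaves $r-t$ fixed while shifting $s$ by $\mp(r-t)$, so the gcd is unchanged in all three cases. Writing $m:=r\bmod n$, the divisibility $n\mid r-t$ forces $r\equiv t\pmod n$, and the same calculation shows this common residue class is preserved by the generators, up to the sign ambiguity $[r:s:t]=[-r:-s:-t]$ of a coprime representative. Hence every $B_4$-orbit is contained in one of the sets in part~(ii), yielding the inclusion ``$\subseteq$'' in each orbit formula and splitting $\PP^2(\QQ)$ into unions of orbits indexed by the invariant $(n,\{m,-m\}\bmod n)$.

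\textbf{Euclidean reduction.} For the reverse inclusions I reduce an arbitrary coprime triple to canonical form via a Euclidean-type algorithm on the pair $(|r-t|,|s|)$. Since $\sigma_1^{\pm 1}$ changes $r-t$ by $\pm s$ (and similarly for $\sigma_3^{\pm 1}$) while $\sigma_2^{\pm 1}$ changes $s$ by $\mp(r-t)$, I can alternately reduce $|r-t|$ modulo $|s|$ and $|s|$ modulo $|r-t|$. The pair strictly decreases and terminates in one of two configurations: either $r=t$, so the point has the diagonal form $[r:n:r]$ with $\gcd(r,n)=1$, or $s=0$, so the point is $[r:0:t]$ with $|r-t|=n$. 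The second case is reduced to the first by the braid $\sigma_1^{-1}\sigma_2^{-1}$, which sends $[t+n:0:t]$ to $[t:n:t]$ (with an analogous braid when $r-t<0$).

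\textbf{Diagonal normalization and conclusion.} On the diagonal $[r:n:r]$ with $\gcd(r,n)=1$, an explicit computation shows that the braid $\sigma_3^{-1}\sigma_1$ acts as $r\mapsto r+n$ while preserving the diagonal form, so iterating it and its inverse I may assume $0\le r<n$. A second short computation shows that $\sigma_1^{-1}\sigma_2^{-1}\sigma_1^{-1}$ realizes the reflection $[r:n:r]\mapsto[-r:n:-r]$; combining the shift and the reflection I may further assume $r\le n/2$. For $n=1$ this forces $r=0$, giving $\Orb_{B_4}([0:1:0])$; the fully degenerate configuration $s=0$, $r=t$ forces $r=t=\pm 1$ and gives the fixed point $[1:0:1]$; for $n\ge 2$, coprimality of $r$ and $n$ rules out $r=0$, producing the canonical representative $[m:n:m]$ announced in the statement. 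Disjointness of the three families of orbits follows at once from the invariance established in the first step, and combining invariance (``$\supseteq$'' in the orbit formulas) with the reduction (``$\subseteq$'') yields part~(ii).

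\textbf{Main obstacle.} The technical heart of the argument is consistent bookkeeping through the Euclidean reduction: each $\sigma_i^{\pm 1}$ acts on the integer triple rather than on its coprime representative, so signs and common factors must be tracked at each intermediate step, and the explicit braid words realizing the shift $r\mapsto r+n$ and the reflection $r\mapsto -r$ on the diagonal must be verified by direct matrix computation before the Euclidean argument can be concluded.
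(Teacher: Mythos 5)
Your overall strategy -- establishing the invariants $\gcd(r-t,s)$ and $r \bmod n$ (up to sign), then descending to a canonical representative by a Euclidean-type braid word -- is the same as the paper's, and in particular the invariance step matches Lemma~\ref{invariant}. The variant you choose for the reduction (running the Euclidean algorithm on the pair $(|r-t|,|s|)$ rather than simultaneously reducing $r$ and $t$ modulo $s$) means you do not control $r$ and $t$ individually during the descent, which is why you need the extra ``diagonal normalization'' step. That is a legitimate design choice, but it is exactly where the proof breaks.

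The claimed reflection is wrong. Applying the matrices of~\eqref{eq:1} to the column $(r,n,r)$ gives
$$
(r,n,r)\;\xrightarrow{\sigma_1^{-1}}\;(r-n,\,n,\,r)\;\xrightarrow{\sigma_2^{-1}}\;(r-n,\,0,\,r)\;\xrightarrow{\sigma_1^{-1}}\;(r-n,\,0,\,r),
$$
so $\rho(\sigma_1^{-1}\sigma_2^{-1}\sigma_1^{-1})$ sends $[r:n:r]$ to $[r-n:0:r]$, not to $[-r:n:-r]$; indeed, no braid word of length three in $\sigma_1,\sigma_2$ alone can restore the $\sigma_3$-symmetric diagonal form. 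You should replace this with the braid the paper uses: $\rho(\sigma_1\sigma_2^2\sigma_3)$ sends $[n-m:n:n-m]$ to $[m:n:m]$ (verify: $\sigma_3$ gives $(n-m,n,-m)$, then $\sigma_2^2$ gives $(n-m,-n,-m)$, then $\sigma_1$ gives $(-m,-n,-m)\sim(m,n,m)$), so its inverse realizes $[r:n:r]\mapsto[n-r:n:n-r]$, which together with your shift $\sigma_3^{-1}\sigma_1$ yields the desired normalization $0\le m< n/2$. Note finally that the paper's algorithm avoids this patch entirely: by reducing $r$ and $t$ separately modulo $s$ at each step via $\sigma_1^{-a_i}\sigma_3^{c_i}$, it maintains $0\le r_i,t_i<s_i$ throughout, so that on termination the representative $[m:n:m]$ already satisfies $0\le m<n$ and only the single move $\sigma_1\sigma_2^2\sigma_3$ is needed to get into $[0,n/2)$ if required.
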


\noindent
This theorem will be proved in Sections~\ref{FirstProofSec} and~\ref{AlgoSec}.

Besides the singleton orbit $\{[1 : 0 : 1]\}$, every $B_4$-orbit contains infinitely many points.
Moreover, we will show that every such orbit is dense in~$\PP^2(\QQ)$.
However, the orbit of the point $[0:1:0]$ is (conjecturally)  the ``largest'' orbit in the following sense.
For every $N\in\NN$, 
the orbit $\Orb_{B_4}([0:1:0])$ contains at least three times more points
in the subset $\{[r:s:t]\,\vert\,|r|,|s|,|t|\leq N\}$ of the rational plane than the union of the other orbits.
Although we do not have a proof of this statement, we will give the numerical evidence for this ``experimental fact''.
We will refer to this orbit as the ``principal orbit'' and use the special notation
$$
\cO_1 := \Orb_{B_4}([0:1:0]).
$$

Let $\Stab_{[m:n:m]}\subset B_4$ be the stabilizer of a point $[m:n:m]$.
Clearly, $\mathcal{B}\mathcal{I}_4\subset\Stab_{[m:n:m]}$.
The next result gives a complete description of the stabilizers modulo the braid Torelli group $\mathcal{B}\mathcal{I}_4$.
Note in particular that the stabilizer $\Stab_{[0:1:0]}$ of the point $[0:1:0]$ is generated by $\sigma_2$, $\Delta$ and $\tau_1$. 

\begin{thm}
\label{StabConj}
Let $n\in \NN^*$ and $0 \leq m < n$ coprime with $n$. Then
$$
\Stab_{[m:n:m]}/\mathcal{B}\mathcal{I}_4 = 
\left\lbrace
\begin{array}{l l}
\langle ~ \tau_1\Delta,~ \sigma_2 ~\rangle & \text{ if } n\geq 3,\\[4pt]
\langle ~ \tau_1\Delta, ~\sigma_2, ~\sigma_1\sigma_2^2\sigma_3 ~\rangle & \text{ if } n = 2,\\[4pt]
\langle ~\tau_1, ~\Delta, ~\sigma_2 ~\rangle & \text{ if } n = 1.\\
\end{array}
\right.
$$
\end{thm}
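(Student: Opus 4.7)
My plan is to establish the theorem by proving both containments for the quotient $\Stab_{[m:n:m]}/\mathcal{B}\mathcal{I}_4$. The first reduction is to note that $B_4/\mathcal{B}\mathcal{I}_4$ embeds into $\SL_3(\ZZ)$ via $\rho$, so $\Stab_{[m:n:m]}/\mathcal{B}\mathcal{I}_4$ is identified with the pointwise stabilizer of $[m:n:m]\in\PP^2(\QQ)$ inside the image $\rho(B_4)$. The problem becomes a purely matrix-theoretic question about the integral Burau image.

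I would verify the inclusion $\supseteq$ by direct computation with the matrices $\rho(\sigma_i)$ displayed in the introduction. The generator $\sigma_2$ fixes every point of the form $[m:n:m]$ because~\eqref{eq:1} sends $[m:n:m]$ to $[m:n+m-m:m]=[m:n:m]$. For $\tau_1\Delta$ one computes the product matrix in $\SL_3(\ZZ)$ and applies it to the column $(m,n,m)^T$, checking that the result equals $\pm(m,n,m)^T$. The additional generator $\sigma_1\sigma_2^2\sigma_3$ in the case $n=2$ is verified the same way on $[1:2:1]$, and in the case $n=1$ one separately checks that each of $\tau_1$ and $\Delta$ fixes $[0:1:0]$.

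For the converse inclusion $\subseteq$, the natural approach is to exploit the reduction algorithm built in Section~\ref{AlgoSec} for the proof of Theorem~\ref{decomposition_orbits}. That algorithm produces, for each $p\in\Orb_{B_4}([m:n:m])$, a canonical braid word $w(p)$ with $\rho(w(p))\cdot p = [m:n:m]$; the family $\{w(p)\}$ is a Schreier transversal for $\Stab_{[m:n:m]}$ in $B_4$. A Reidemeister--Schreier calculation then yields a generating set for $\Stab_{[m:n:m]}$: for each transversal element $w(p)$ and each generator $\sigma_i$, the element $w(p)\,\sigma_i\,w(\rho(\sigma_i)p)^{-1}$ lies in $\Stab_{[m:n:m]}$. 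The case split on $n$ should emerge naturally from analysing Schreier generators produced at the normal-form point itself: for $n\geq 3$ the point $[m:n:m]$ admits no extra symmetry beyond the generic $\tau_1\Delta$ and $\sigma_2$; for $n=2$ the point $[1:2:1]$ has an additional palindromic symmetry witnessed by $\sigma_1\sigma_2^2\sigma_3$; and for $n=1$ the point $[0:1:0]$ is fixed separately by $\tau_1$ and by $\Delta$, not merely by their product.

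The main obstacle is precisely the simplification step: controlling the \emph{a priori} large family of Schreier generators and verifying that, modulo $\mathcal{B}\mathcal{I}_4$, they all collapse onto the concise list claimed in the theorem. This reduction likely requires either an explicit partial presentation of $B_4/\mathcal{B}\mathcal{I}_4$ permitting one to rewrite Schreier generators, or an independent upper bound on $\Stab_{[m:n:m]}/\mathcal{B}\mathcal{I}_4$ obtained by intersecting $\rho(B_4)$ with the full $\SL_3(\ZZ)$-parabolic that fixes $[m:n:m]$, using the explicit orbit description from Theorem~\ref{decomposition_orbits}(ii) (congruences on $r,t$ modulo $n$) to pin down exactly which parabolic elements are hit by $\rho$.
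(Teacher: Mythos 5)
Your $\supseteq$ verification is exactly what the paper does and is straightforward.

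For $\subseteq$, however, your primary proposal — Reidemeister--Schreier on the Schreier transversal produced by the braided Euclidean algorithm — is not the paper's route, and it has a real gap: the transversal is infinite (one word per orbit point), so the Schreier generating set is infinite, and reducing it modulo $\mathcal{B}\mathcal{I}_4$ to the short lists claimed requires a presentation of $B_4/\mathcal{B}\mathcal{I}_4$ or some other rewriting machinery that you do not supply. You flag this as the ``main obstacle,'' but flagging it does not close it; as written, the argument would not terminate.

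Your alternative suggestion — intersect $\rho(B_4)$ with the $\SL_3(\ZZ)$-parabolic stabilizing $[m:n:m]$ — is much closer to what the paper actually does, but it is missing the one observation that makes it tractable. The paper does \emph{not} need to describe $\rho(B_4)$ inside $\SL_3(\ZZ)$. Instead, it uses the fact that the column vector $(1,0,1)$ is fixed by \emph{every} matrix in $\rho(B_4)$ (this is the singleton orbit $\{[1:0:1]\}$, and the fix is on the nose, not just projectively). So any $M=\rho(\beta)$ with $\beta\in\Stab_{[m:n:m]}$ automatically satisfies $M(1,0,1)=(1,0,1)$, and additionally $M(m,n,m)=\pm(m,n,m)$. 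These two linear constraints together with $\det M=1$ force $M$ into a two-parameter family (parameters $a,x$); a left multiplication by $\rho(\sigma_2)^x$ kills one parameter, and the remaining one-parameter family is checked to be exactly $\rho((\tau_1\Delta)^{a-1})$ in the $+$ case. In the $-$ case one finds $M(0,1,0)=(-2m/n,-1,-2m/n)$, which together with the orbit classification of Theorem~\ref{decomposition_orbits} shows this case is void unless $n\leq 2$, and in those two residual cases the matrix is matched against $\sigma_1\sigma_2^2\sigma_3$ and $\tau_1$ respectively. This is what makes the case split on $n$ appear, and it is a short linear-algebra computation rather than a combinatorial group-theoretic one. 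If you add the observation that $(1,0,1)$ is a genuine fixed vector and carry out the resulting parametrization, your second approach becomes the paper's proof.
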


\noindent
This statement will be proved in Section~\ref{StabSec}.

\subsection{Quantization procedure, comparison to $q$-rationals}
We  introduce the notion of
\textit{quantization of the rational projective plane} $\PP^2(\QQ)$.
The quantization map is a set-valued function
$$
\cQ:\PP^2(\QQ)\to \mathcal{P}\big(\PP^2(\overline{\Lambda})\big).
$$
It associates to every point $p = [r:s:t]$ of  $\PP^2(\QQ)$ an infinite set of points $[R(q):S(q):T(q)]$, 
called the quantization of $p$, where $R$, $S$ and $T$ are polynomials in~$q$ with integer coefficients.

The precise definition is as follows.
For every $p\in\PP^2(\QQ)$, in the orbit of $[m:n:m]$, we set
\begin{equation}
\label{QuantMap}
\cQ(p) := \left\{\rho_q(\beta)([m:n:m]) ~|~\beta \text{ s.t. } \rho(\beta)([m:n:m]) = p\right\}.
\end{equation}

We will be mostly interested in the quantization of the principal orbit $\cO_1$.
The image of $\cO_1$ with respect to the quantization map will be denoted by $\cO_q$.

The above quantization procedure is analogous to the notion of \textit{$q$-deformed rationals} introduced in~\cite{MGO}.
The main difference is that the image of one point by our quantization map~\eqref{QuantMap} consists in an infinite number of points.
We will explain this phenomenon is Section~\ref{XXX}.

Let us briefly describe the quantization procedure of Morier-Genoud and Ovsienko
using the terms which are most close to our context.
Consider the rational projective line $\PP^1(\QQ)$ equipped with the standard transitive action of the modular group $\PSL(2,\ZZ)$
$$
\begin{pmatrix}
a & b \\
c & d \\
\end{pmatrix}:
[r:s]\longmapsto[ar+bs:cr+ds].
$$
The $\PSL(2,\ZZ)$-action can also be considered as an action of the braid group $B_3$ (the center acts trivially).
The Burau representation $\rho_q$ of $B_3$ then defines a $\PSL(2,\ZZ)$-action on $\PP^1(\overline{\Lambda})$.
For the generators,
$$
\rho_q(\sigma_1):  [r:s]  \mapsto  [qr+s : s],
\qquad
\rho_q(\sigma_2): [r:s] \mapsto [r:q(s-r)].
$$
The quantization of Morier-Genoud and Ovsienko is the unique map 
$$
\cQ:\PP^1(\QQ)\to\PP^1(\overline{\Lambda})
$$
that commutes with the $\PSL(2,\ZZ)$-action and sends $[0:1]$ to $[0:1]$ (this point remains unchanged).
It associates to a point $[r:s]$ a pair of monic polynomials with positive integer coefficients $(R(q),S(q))$.
In other words, the $q$-rationals are defined as the orbit of the point $[0:1]\in\PP^1(\overline{\Lambda})$ under the Burau representation.

The notion of $q$-rationals enjoys a number of remarkable properties,
we mention only few of them:
\begin{itemize}
\item
the ``total positivity'' property~\cite{MGO} that means
roughly speaking that the topology of $\PP^1(\QQ)$ is preserved by quantization;

\item
the unimodality property, conjectured in~\cite{MGO} and eventually proved in~\cite{oguz_rank_2023},
asserts that the sequences of coefficients of the polynomials $R(q)$ and $S(q)$ are unimodal;

\item
a connection to the Jones polynomial of rational knots (see~\cite{MGO, Sikora_2023});

\item
the stabilization phenomenon that led to the notion of a $q$-deformed real number~\cite{qreals}.

\end{itemize}
\noindent
An application of $q$-rationals to the Burau representation of $B_3$ was suggested in~\cite{morier-genoud_burau_2024},
where the information about the polynomials arising in the process of quantization of $\PP^1(\QQ)$
was sufficient to guarantee faithfulness of specializations of the Burau representation.
This application is an important motivation for our study,
we hope that the polynomials $R,S,T$ will be useful for the study of the Burau representation of $B_4$.

\subsection{An embedding of the projective line}

Consider the following embedding of the projective line into the projective plane 
$$
\PP^1(\QQ)\overset{\iota}{\hookrightarrow}\PP^2(\QQ),
\qquad\qquad
[r:s]\mapsto[r:s:0].
$$ 
It is easy to see that its image belongs to the principal orbit $\cO_1$. 
Similarly, we define the embedding
$$
\PP^1(\overline{\Lambda})\overset{\iota_q}{\hookrightarrow}\PP^2(\overline{\Lambda}),
\qquad\qquad
[R(q):S(q)]\mapsto[R(q):S(q):0].
$$ 
We will prove that the above embeddings commute with quantization.

\begin{thm}
\label{QuantThm}
The quantization of the projective line in the sense of~\cite{MGO} 
and our quantization
commute with the embeddings. 
In other words, for every $[r:s] \in \PP^1(\QQ)$, we have
$$\iota_q(\cQ([r:s])) \in \cQ(\iota([r:s])).$$
\end{thm}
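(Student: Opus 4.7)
The plan is to exploit the homomorphism $B_3\to B_4$ sending each generator $\sigma_i$ ($i=1,2$) of $B_3$ to the generator of $B_4$ denoted by the same letter. This is well defined because the only defining relation of $B_3$, namely $\sigma_1\sigma_2\sigma_1=\sigma_2\sigma_1\sigma_2$, is one of the defining relations of $B_4$. Given $\beta\in B_3$, I denote its image in $B_4$ by $\tilde\beta$.

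The key observation is that both $B_4$-matrices $\rho_q(\sigma_1)$ and $\rho_q(\sigma_2)$ in~\eqref{Burau} have last row equal to $(0,0,1)$, and therefore stabilize the projective subspace of $\PP^2(\overline{\Lambda})$ cut out by $t=0$, which is precisely the image of $\iota_q$. A direct calculation gives
\begin{align*}
\rho_q(\sigma_1)([r:s:0]) &= [qr+s:s:0],\\
\rho_q(\sigma_2)([r:s:0]) &= [r:q(s-r):0],
\end{align*}
and these coincide, after identification via $\iota_q$, with the $B_3$-Burau formulas on $\PP^1(\overline{\Lambda})$ recalled in the introduction. It follows that, for every $\beta\in B_3$, the map $\iota_q$ intertwines the action of $\rho_q(\beta)$ on $\PP^1(\overline{\Lambda})$ with the action of $\rho_q(\tilde\beta)$ on $\PP^2(\overline{\Lambda})$. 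Specializing at $q=1$ yields the analogous intertwining of the integral actions through $\iota$.

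Given $[r:s]\in\PP^1(\QQ)$, transitivity of the $\PSL_2(\ZZ)$-action supplies $\beta\in B_3$ with $\rho(\beta)([0:1])=[r:s]$, and by definition $\cQ([r:s])=\rho_q(\beta)([0:1])$. Combining this with the intertwining relations of the previous paragraph yields
\begin{align*}
\rho(\tilde\beta)([0:1:0]) &= \iota([r:s]),\\
\rho_q(\tilde\beta)([0:1:0]) &= \iota_q(\cQ([r:s])).
\end{align*}
Now $\iota([r:s])=[r:s:0]$ lies in the principal orbit $\cO_1=\Orb_{B_4}([0:1:0])$, since $\gcd(r-0,s)=\gcd(r,s)=1$ by Theorem~\ref{decomposition_orbits}(ii). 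Definition~\eqref{QuantMap} (with $(m,n)=(0,1)$) therefore accepts $\tilde\beta$ as a valid witness, proving that $\iota_q(\cQ([r:s]))\in\cQ(\iota([r:s]))$, as required.

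No step of this argument is truly delicate: it reduces to the formal verification that the restriction of the $B_4$-Burau representation to the projective hyperplane $\{t=0\}$ recovers the $B_3$-Burau representation. The only conceptual nuance is that the planar quantization $\cQ$ is set-valued, so the argument singles out \emph{one} element of $\cQ(\iota([r:s]))$, namely $\iota_q(\cQ([r:s]))$, without describing the remaining (generally infinitely many) elements of that set; understanding them should amount to analyzing the action of the stabilizer of $[0:1:0]$ computed in Theorem~\ref{StabConj}.
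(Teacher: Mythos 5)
Your proof is correct and takes essentially the same route as the paper: both hinge on the observation that the $B_4$-Burau matrices $\rho_q(\sigma_1)$, $\rho_q(\sigma_2)$ preserve the hyperplane $\{t=0\}$ and restrict there to the $B_3$-Burau action, which (together with the embedding $B_3\hookrightarrow B_4$) gives the intertwining. The only cosmetic difference is that the paper exhibits a concrete witnessing braid from the braided Euclidean algorithm (observing that starting from $[r:s:0]$ the algorithm never invokes $\sigma_3$) and then reads off the block form of $\rho_q(\beta^{-1})$, whereas you invoke transitivity to pick an arbitrary $\beta\in B_3$ and rely on the well-definedness of the MGO map $\cQ$ on $\PP^1(\QQ)$; both are valid, and yours is marginally more abstract while the paper's is more computational.
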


\noindent
This statement will be proved in Section~\ref{QuantThmSec}.

The other natural embedding of the projective line into the projective plane, namely
$[r:s]\mapsto[0:r:s]$ also commutes with quantization.

\section{The structure of orbits}

In this section, we prove Theorem~\ref{decomposition_orbits} and Theorem \ref{StabConj}, and we introduce the braided Euclidean algorithm.

\subsection{Proof of Theorem~\ref{decomposition_orbits}, first part}\label{FirstProofSec}
In this section, we prove Theorem~\ref{decomposition_orbits} in one direction.
We check that the orbits $\Orb_{B_4}([m:n:m])$ with different values of (coprime) $m$ and $n$ 
and $0 \leq m < n/2$,
are indeed disjoint. 

\begin{lemme} \label{invariant}
(i) 
For every $[r:s:t] \in \PP^2(\QQ)$, the number 
$$
n:=\gcd(r-t,s)
$$ 
is invariant under the action of $B_4$. 

(ii)
Up to the sign, the number
$
r \!\!\mod(n)=t \!\!\mod(n)
$ 
is invariant under the action of $B_4$.
\end{lemme}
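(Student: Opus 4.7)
The plan is to verify both invariance statements by a direct check on each of the three generators $\sigma_1,\sigma_2,\sigma_3$ of $B_4$ acting by the explicit formulas~\eqref{eq:1}. Since the action of $B_4$ is generated by these three transformations (and their inverses, which are handled by the same computation), invariance under each generator implies invariance under the whole group.

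For part (i), I would compute $n'=\gcd(r'-t',s')$ after applying each generator and check that $n'=n$. Under $\sigma_1$ the triple becomes $(r+s,s,t)$, so $r'-t'=(r-t)+s$ while $s'=s$; the elementary identity $\gcd(a+b,b)=\gcd(a,b)$ then gives $n'=\gcd(r-t,s)=n$. Under $\sigma_3$ the triple becomes $(r,s,t-s)$, so $r'-t'=(r-t)+s$ and $s'=s$, which is the same computation. Under $\sigma_2$ the triple becomes $(r,s+t-r,t)$, so $r'-t'=r-t$ and $s'=s-(r-t)$, and again $\gcd(r-t,s-(r-t))=\gcd(r-t,s)$.

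For part (ii), first note that because $n$ divides $r-t$, one has $r\equiv t\pmod n$, so the class $r\bmod n$ coincides with $t\bmod n$ and the statement makes sense. I then check that $r\bmod n$ is unchanged by each generator: under $\sigma_1$ the first coordinate becomes $r+s$, but $n\mid s$, so $r+s\equiv r\pmod n$; under $\sigma_2$ and $\sigma_3$ the first coordinate is literally unchanged, so there is nothing to do.

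The only subtlety, and the reason for the ``up to sign'' qualification in the statement, is the convention on homogeneous coordinates recalled before~\eqref{eq:1}: each point $[r:s:t]\in\PP^2(\QQ)$ has two integer primitive representatives differing by a global sign, and replacing $(r,s,t)$ by $(-r,-s,-t)$ sends $r\bmod n$ to $-r\bmod n$. I would close the proof by pointing out that the $B_4$-action preserves the primitive integer convention, so the only ambiguity introduced when passing from a triple to its point in $\PP^2(\QQ)$ is this overall sign, and hence $r\bmod n$ is well defined up to a sign. I do not anticipate any serious obstacle: the proof is a one-line computation for each of the three generators, and the only conceptual point is handling the sign ambiguity coming from projective coordinates.
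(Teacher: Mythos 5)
Your proof is correct and follows essentially the same route as the paper: a direct generator-by-generator verification using the explicit action~\eqref{eq:1}, the gcd identity $\gcd(a+b,b)=\gcd(a,b)$ for part~(i), and the observation that $n\mid s$ together with the sign ambiguity of primitive homogeneous coordinates for part~(ii). You simply spell out the three cases in slightly more detail than the paper does.
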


\begin{proof}~\\
	\noindent (i). As $\gcd(r+s-t,s) = \gcd(r-t,s+r-t) = \gcd(r-t,s)$, the actions of the generators $\sigma_1$, $\sigma_2$, $\sigma_3$ given by~\eqref{eq:1}
do not change the quantity $\gcd(r-t,s)$.\\
~\\
\noindent (ii). The quantities $r \!\!\mod(n)$ and $t \!\!\mod(n)$ coincide because $n$ divides $r-t$ so $r\equiv t \mod(n)$. 
Because of the sign change $[r:s:t] = [-r:-s:-t]$, this value is only defined up to the sign.
It is straightforward to check that it is invariant under the action of the generators of $B_4$.
\end{proof}

The above lemma implies that if a point $[r:s:t]$ belongs to the orbit of $[m:n:m]$, then $\gcd(r-t,s) = n$ and $r$ and $s$ are congruent to $\pm m$ modulo $n$.
In particular we deduce that different points $[m:n:m]$, with $n \in \NN^*$ and $m$ coprime to $n$ such that $0 \leq m < n/2$, 
belong to different orbits.

\subsection{The condition $0 \leq m < n/2$, further examples of orbits}

Let us now explain why the condition $0 \leq m < n/2$ is necessary to have disjoint orbits.

Consider a point $[m:n:m]\in\PP^2(\QQ)$ with $m<n$.
One then has
$$
[m:n:m] = \rho(\sigma_1\sigma_2^2\sigma_3) \left([n-m:n:n-m]\right),
$$
so that $[m:n:m]$ and $[n-m:n:n-m]$ belong to the same orbit.
Therefore the condition $0 \leq m < n/2$ is indeed necessary to have different orbits.
Note also that when $n\geq2$, one cannot take $m=0$ since
$[0:n:0]=[0:1:0]$, so that the condition reads $0 < m < n/2$ in this case.

\begin{ex}
(i)
For $n = 2$, our list of orbits contains only one orbit, which is the orbit of $[1:2:1]$. 
More explicitly, the orbit $\Orb_{B_4}([1:2:1])$ consists in the points
$[r:s:t]$ such that $r$ and $t$ are odd, $s$ is even and $ \gcd(r-t,s) = 2$.

(ii) 
For $n= 3$, there is also only one orbit, the orbit of $[1:3:1]$. 
The point $[2:3:2]$ is recovered for instance by 
$$
[1:3:1] \overset{\sigma_3}{\longrightarrow} [1:3:-2] 
\overset{\sigma_2^{2}}{\longrightarrow} [1:-3:-2] = [-1:3:2] 
\overset{\sigma_1}{\longrightarrow} [2:3:2].
$$
The orbit $\Orb_{B_4}([1:3:1])$ consists of the points $[r:s:t]$ such that $s$ is a multiple of $3$ and $ \gcd(r-t,s) = 3$.
\end{ex}

\subsection{An Euclid-like algorithm, end of the proof of Theorem~\ref{decomposition_orbits}}\label{AlgoSec}
In this subsection, we finish the proof of Theorem~\ref{decomposition_orbits}.
We show that every point of $\PP^2(\QQ)$ belongs to the orbit of $[m:n:m]$ for some $m$ and $n$. 
For this end, we construct an explicit way to go from a point $[r:s:t]$, to the corresponding representative $[m:n:m]$. 
Note that this algorithm fits into the framework of Jacobi-Perron type multicontinued fraction algorithm as described in \cite{rada_periodicity_2024}. 

\vspace{2mm}

\algo{Braided Euclidean algorithm}{
{\noindent{\bf \underline{Input}:}}
We start with a point $[r:s:t] \neq [1:0:1]$, with $r,s,t \in \ZZ$, mutually prime. We can assume that $s \geq 0$.

\vspace{1mm}

{\noindent {\bf \underline{Step 1 of the algorithm}:}}
If $s = 0$, then apply $\sigma_2$ to replace $s$ by $t-r$, and if necessary change signs to get $s > 0$.\\
\noindent Write the Euclidean division of $r$ by $s$ and $t$ by $s$ :
$$r = sa_1 + r' \text{ ~,~ } t = sc_1 + t'.$$
\noindent Apply $\sigma_1^{-a_1}\sigma_3^{c_1}$, so that $[r:s:t] \mapsto [r':s:t'] =: [r_1:s_1:t_1]$.\\

\vspace{1mm}

{\noindent {\bf \underline{Step 2 of the algorithm}:}}
While $r_i-t_i \neq 0$, repeat :\\
\noindent Write the upper Euclidean division of $s_i$ by $(r_i-t_i)$, i.e. 
$$s_i = (r_i-t_i)b_{2i} + s' \text{ with } 0 < s' \leq \abs{r_i-t_i}.$$ 
\noindent Apply $\sigma_2^{b_{2i}}$ and put $s_{i+1} := s'$.\\
\noindent Write the Euclidean divisions of $r_i$ and $t_i$ by $s_{i+1}$ : 
$$r_i = s_{i+1}a_{2i+1} + r' \text{ ~,~ } t_i = s_{i+1}c_{2i+1} + t'.$$
\noindent Apply $\sigma_1^{-a_{2i+1}}\sigma_3^{c_{2i+1}}$ and put $r_{i+1} := r'$ and $t_{i+1} := t'$.

\vspace{1mm}

{\noindent{\bf \underline{Termination of the algorithm}:}}
If we have arrived to $r_i = t_i$, we do not proceed further. Here the algorithm terminates.
}

\vspace{1mm}

\begin{ex}
Let us apply the algorithm to the point $x = [37:30:12]$. As $\gcd(37-12,30) = 5$ and $37 \equiv 2 \mod(5)$, we know that $x$ is in the orbit of $[2:5:2]$. \\
\noindent First step : $37 = 30*1+7$ so $x  \overset{\sigma_1^{-1}}{\longrightarrow} [7:30:12]$.\\
\noindent Second step : $30 = \abs{7-12}*5 + 5$ so $[7:30:12] \overset{\sigma_2^{-5}}{\longrightarrow} [7:5:12]$ and then $[7:5:12] \overset{\sigma_1^{-1}\sigma_3^{2}}{\longrightarrow} [2:5:2]$.\\
\noindent Finally the braid $\sigma_1^{-1}\sigma_3^{2}\sigma_2^{-5}\sigma_1^{-1}$ sends $x$ to the representative of its orbit, $[2:5:2]$.

\end{ex}

\begin{prop} ~\\
\label{Terminator}
(i) The braided Euclidean algorithm described above terminates.\\
~\\
(ii) Given a point $p = [r:s:t] \in \PP^2(\QQ)$, with $n = \gcd(r-t,s)$ and $m = r \mod(n)$, the braided Euclidean algorithm provides a braid 
$\beta_p \in B_4$ such that
$$
\rho(\beta_p) \left(p \right) = [m:n:m],
$$ 
\end{prop}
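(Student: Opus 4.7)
The plan is to separate termination (i) from the identification of the output (ii), and to establish both through two simple invariants maintained by the algorithm: the quantity $n=\gcd(r_i-t_i,s_i)$ from Lemma~\ref{invariant}, and the residue class $r_i\bmod n$.

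First I would translate the elementary moves of the algorithm into their action on a triple $(r,s,t)$ using~\eqref{eq:1}: $\sigma_1^{-a}$ subtracts $as$ from $r$, $\sigma_3^{c}$ subtracts $cs$ from $t$, and $\sigma_2^{b}$ replaces $s$ by $s-b(r-t)$. Thus Step~1 simply reduces $r$ and $t$ modulo $s$, while each iteration of Step~2 first replaces $s_i$ by $s_{i+1}=s_i-b_{2i}(r_i-t_i)$ and then reduces $r_i,t_i$ modulo $s_{i+1}$. The preliminary $\sigma_2$-move, used when $s=0$ on input, leaves $n$ invariant because $\gcd(r-t,0)=|r-t|=\gcd(r-t,t-r)$, and the global sign flip is just the identification $[r:s:t]=[-r:-s:-t]$.

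For termination, I would track the sequence $(s_i)$. At the end of Step~1 and after every iteration of Step~2 one has $0\le r_i,t_i<s_i$, hence $|r_i-t_i|<s_i$. The \emph{upper} Euclidean division used in Step~2 then yields $0<s_{i+1}\le|r_i-t_i|$, so $s_{i+1}<s_i$ strictly. Since $n$ divides every $s_i$ by Lemma~\ref{invariant}, the positive integers $s_1>s_2>\cdots$ form a strictly decreasing sequence of positive multiples of~$n$ and must reach~$n$ in finitely many steps; at that point $|r_i-t_i|<n$ combined with $n\mid r_i-t_i$ forces $r_i=t_i$, which is the halting condition.

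For (ii), I would prove by induction that $r_i\equiv r\pmod n$ throughout the algorithm. The base case is $r_1=r-a_1s$ with $n\mid s$, and the inductive step uses $r_{i+1}=r_i-a_{2i+1}s_{i+1}$ with $n\mid s_{i+1}$ (which itself follows from $n\mid s_i$ and $n\mid r_i-t_i$). At termination, $r_i\in[0,n)$ and $r_i\equiv r\pmod n$ force $r_i=m$, so the final point is exactly $[m:n:m]$; the braid $\beta_p$ is then the product of all generators used, in the order of application. I do not expect a serious obstacle; the only subtle point is the choice of \emph{upper} Euclidean division in Step~2, which is what guarantees $s_{i+1}<s_i$ even when $|r_i-t_i|$ divides $s_i$.
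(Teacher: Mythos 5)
Your proof is correct and follows essentially the same strategy as the paper: establish the inequalities $0 \le r_i, t_i < s_i$, deduce a strictly decreasing sequence to force termination, and use the invariants of Lemma~\ref{invariant} to pin down the output. The one stylistic difference is that the paper tracks $(|r_i - t_i|)_i$ decreasing to $0$, whereas you track $(s_i)_i$ as strictly decreasing positive multiples of~$n$; these are equivalent since, by the invariant $\gcd(r_i - t_i, s_i) = n$ together with $|r_i - t_i| < s_i$, the halting condition $r_i = t_i$ is reached exactly when $s_i = n$. Your extra observation that the \emph{upper} Euclidean division is what prevents $s_{i+1} = 0$ is a useful clarification the paper leaves implicit.
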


\begin{proof}
At each step of the algorithm, by the Euclidean division property, the inequalities below hold:
$$ (\text{I}) ~\abs{r_{i+1}-t_{i+1}} < s_{i+1} < \abs{r_i-t_i} < s_i \text{ ~and~ } (\text{II})~ 0 \leq r_i < s_i,~~0\leq t_i <s_{i}. $$

\noindent Therefore the sequences $(s_i)_i$ and $(\abs{r_i-t_i})_i$ are strictly decreasing. The sequence $(\abs{r_i-t_i})_i$ reaches $0$ in a finite number $N$ of steps, so the algorithm terminates.\\
~\\
\noindent Moreover, when $i = N$, we have $r_N = t_N$ so in the end we get a point of the type $[m:n:m]$, with $0 \leq m < n$ by $(\text{II})$. \\
\noindent Furthermore, Lemma \ref{invariant} ensures that $\gcd(r-t,s) = \gcd(m-m,n) = n$ and $r,t \equiv m \mod(n)$.\\
~\\
\noindent At each step the algorithm uses the action of one elementary braid $\sigma_i$, so one can recover a braid $\beta_p$ sending the starting point $[r:s:t]$ to the representative $[m:n:m]$. This braid can be expressed as 
$$\beta_p = \sigma_1^{-a_{2k+1}}\sigma_3^{c_{2k+1}}\sigma_2^{b_{2k}} ~ \cdots ~ \sigma_1^{-a_3}\sigma_3^{c_3}\sigma_2^{b_2}\sigma_1^{-a_1}\sigma_3^{c_1}\sigma_2^{b_0},$$
\noindent where $b_0 = \delta_{s,0}$ and the $a_i$'s and $c_i$'s are uniquely defined by the algorithm. 
\end{proof}

\begin{rem}
The algorithm does not take into account the fact that $[m:n:m]$ is in the same orbit as $[n-m:n:n-m]$. If we really want to have only the representatives of the form $[m:n:m]$ with $m< n/2$ in the end of the algorithm, we can just add $\sigma_1\sigma_2^2\sigma_3$ as a final step in case we reached the wrong representative.
\end{rem}

Proposition~\ref{Terminator} implies that the rational projective line $\PP^2(\QQ)$ is indeed a union of the $B_4$-orbits $\Orb_{B_4}([m:n:m])$.
Theorem~\ref{decomposition_orbits} is proved.

\subsection{Connection to multidimensional continued fractions}

Let us explain in which sense our braided Euclidean algorithm 
is a Jacobi-Perron type multidimensional continued fractions (MCF) algorithm. 
For a clear description of these algorithms, see \cite{rada_periodicity_2024,karpenkov_hermites_2021}.

\begin{prop}
Consider the following subsets of $\RR_+^3$ :
$$I_0 = \{(r,s,t) ~|~ s = \min(r,s,t)\}$$
$$I_1 = \{(r,s,t)~|~t< s \leq r\} \text{ , } I_3 = \{ (r,s,t)~|~ r < s \leq t\}$$
$$I_2 = \{(r,s,t)~|~ t < r < s\} \text{ , } I_2' = \{(r,s,t)~|~ r < t < s\}$$
\noindent then the Euclid's braided algorithm is an $(\{I_0,I_1,I_2,I_2',I_3\},\{\rho_4(\sigma_1\sigma_3^{-1}),\rho_4(\sigma_1),\rho_4(\sigma_2),\rho_4(\sigma_2^{-1}),\rho_4(\sigma_3^{-1})\})$-MCF algorithm. 
\end{prop}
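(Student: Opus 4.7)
The plan is to unpack the braided Euclidean algorithm into its atomic moves and match each region of the partition with the corresponding elementary braid. First I would check that the five sets $I_0, I_1, I_2, I_2', I_3$ partition the interior of $\RR_+^3$ modulo the boundary equalities, which correspond to halting or region-switching conditions in the algorithm. A simple case analysis on whether $r<t$, $r=t$, or $r>t$, combined with the position of $s$ relative to $\min(r,t)$ and $\max(r,t)$, gives this partition directly; the region $I_0$ collects exactly the two sub-cases in which $s$ is the minimum coordinate.

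Next I would identify the atomic braid applied in each region. In $I_1$, where $t<s\leq r$, only $r$ exceeds $s$, so one atomic reduction is $\sigma_1^{-1}$, which sends $r\mapsto r-s$; its inverse $\sigma_1$ is what appears in the MCF list. In $I_3$, where $r<s\leq t$, the situation is symmetric and the atomic reduction is $\sigma_3$, whose inverse $\sigma_3^{-1}$ appears. In $I_0$, where $s$ is minimal, both $r,t\geq s$, so the one-step combined reduction is $\sigma_1^{-1}\sigma_3$, whose inverse is $\sigma_1\sigma_3^{-1}$ (using that $\sigma_1$ and $\sigma_3$ commute). In $I_2$, where $t<r<s$, the upper Euclidean division of $s$ by $r-t$ atomically becomes $\sigma_2$, which sends $s\mapsto s-(r-t)$; in $I_2'$ similarly $\sigma_2^{-1}$ reduces $s$. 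This matches the five matrices in the statement under the Jacobi-Perron MCF convention of \cite{rada_periodicity_2024,karpenkov_hermites_2021}, which records the reconstructing rather than the reducing matrix.

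The main technical point is that the algorithm as presented in Section~\ref{AlgoSec} processes each region in \emph{blocks}, applying $\sigma_1^{-a}\sigma_3^c$ or $\sigma_2^b$ all at once, whereas the MCF framework processes one atomic matrix per iteration. The correspondence therefore requires observing that a single atomic reduction keeps the point inside the same region until a threshold is crossed, with the total number of MCF iterations equal to the quotient from the block's Euclidean division; this is essentially inequality $(\mathrm{I})$ in the proof of Proposition~\ref{Terminator}, read one unit at a time.

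The expected obstacle is reconciling the precise Jacobi-Perron conventions in \cite{rada_periodicity_2024,karpenkov_hermites_2021} — forward versus inverse direction, and whether the matrices act on column vectors or on projective coordinates — with the Burau normalization~\eqref{Burau} used here, so that the five matrices listed come out with exactly the correct signs and orientations. Once the convention is pinned down, the remainder of the verification is a direct combinatorial matching between the five regions and the five atomic moves.
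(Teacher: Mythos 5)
Your overall approach is the same as the paper's, which proves this proposition in a single line ("it is just a restatement of the steps of the braided Euclid's algorithm"); your proposal is essentially that restatement carried out in detail, which is the right level of granularity. The observation that the block steps $\sigma_1^{-a}\sigma_3^c$ and $\sigma_2^b$ must be broken into atomic factors, with one MCF iteration per unit quotient, is exactly the needed reduction.

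However, there is an unresolved sign inconsistency in your matching, precisely at the place you flagged as "the expected obstacle" and then declared resolved. For $I_0, I_1, I_3$ you use the convention that the MCF matrix is the \emph{inverse} of the applied atomic reduction: the reduction in $I_1$ is $\rho(\sigma_1^{-1})$ and the listed matrix is $\rho(\sigma_1)$; the reduction in $I_3$ is $\rho(\sigma_3)$ and the listed matrix is $\rho(\sigma_3^{-1})$; the reduction in $I_0$ is $\rho(\sigma_1^{-1}\sigma_3)$ and the listed matrix is $\rho(\sigma_1\sigma_3^{-1})$. But for $I_2$ you correctly identify the reduction as $\rho(\sigma_2)$ (since $t<r$ makes $s\mapsto s+t-r$ a decrease), yet the statement pairs $I_2$ with $\rho(\sigma_2)$ itself, not $\rho(\sigma_2^{-1})$; symmetrically for $I_2'$. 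So your claim "this matches the five matrices... under the Jacobi-Perron MCF convention, which records the reconstructing rather than the reducing matrix" is not actually checked, and under the convention you state, $I_2$ and $I_2'$ do \emph{not} match the list as given. Either the pairing $\rho(\sigma_2)\leftrightarrow I_2$, $\rho(\sigma_2^{-1})\leftrightarrow I_2'$ in the statement should be swapped to conform to the reconstructing convention, or you need a reason (which you don't supply) for using the direct convention on the $\sigma_2$-steps and the inverse convention on the $\sigma_1,\sigma_3$-steps. To close this gap you should pin down the exact MCF normalization from the cited references, verify each of the five pairings against it, and record the correction if the stated pairing is off by an inverse in two of the five cases.
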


\begin{proof}
It is just a restatement of the steps of the braided Euclid's algorithm.
\end{proof}

\subsection{Stabilizers of the points $[m:n:m]$.}\label{StabSec}
In this subsection, we give a proof of Theorem \ref{StabConj}.

Given a point $[r:s:t]\in\PP^2(\QQ)$, its stabilizer (for the action given by $\rho$) is a subgroup of $B_4$:
$$
\mathrm{Stab}_{[r:s:t]}\subset B_4.
$$
We describe stabilizers of the representative points $[m:n:m]$ of each orbit. 
The braid Torelli group $\mathcal{B}\mathcal{I}_4 = \ker(\rho)$ is obviously a subgroup of every stabilizer. 
Recall from (\ref{GenKer}) that it is normally generated by $\tau_1^2$, $\tau_3^2$ and $\Delta$.

\begin{thm*}[\textbf{\ref{StabConj}}]
Let $n\in \NN^*$ and $0 \leq m < n$ coprime with $n$. Then
$$
\Stab_{[m:n:m]}/\mathcal{B}\mathcal{I}_4 = 
\left\lbrace
\begin{array}{l l}
\langle \tau_1\Delta,\; \sigma_2 \rangle & \text{ if } n\geq 3,\\[4pt]
\langle \tau_1\Delta, \; \sigma_2,\; \sigma_1\sigma_2^2\sigma_3 \rangle & \text{ if } n = 2,\\[4pt]
\langle \tau_1, \; \Delta,\; \sigma_2 \rangle & \text{ if } n = 1.\\
\end{array}
\right.
$$
\end{thm*}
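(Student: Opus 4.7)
The plan is to prove the two inclusions separately. For the direction ``$\supseteq$'', I would check by hand, using the explicit formulas \eqref{eq:1}, that each of the claimed generators fixes $[m:n:m]$ in its respective case. The key observations are: $\sigma_2$ always fixes any point with $r=t$, since $\sigma_2\cdot[m:n:m]=[m:n+m-m:m]=[m:n:m]$; a direct computation gives $\rho(\tau_1)\cdot[m:n:m]=\rho(\Delta)\cdot[m:n:m]=[m:-n:m]$, so $\tau_1\Delta\in\Stab_{[m:n:m]}$ for every $n$, while $\tau_1$ and $\Delta$ individually fix $[m:n:m]$ iff $m=0$, i.e., $n=1$; finally $\sigma_1\sigma_2^2\sigma_3$ sends $[m:n:m]$ to $[n-m:n:n-m]$ (as already used in the text), and this equals $[m:n:m]$ in $\PP^2(\QQ)$ precisely when $n\mid 2m$, which under the coprimality assumption forces $n\in\{1,2\}$.

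For the substantive inclusion ``$\subseteq$'', the tool is the braided Euclidean algorithm of Proposition~\ref{Terminator}. For each point $p'\in\Orb_{B_4}([m:n:m])$ the algorithm produces a canonical braid $\beta_{p'}\in B_4$ with $\rho(\beta_{p'})(p')=[m:n:m]$ (taking $\beta_{[m:n:m]}=e$), and the family $\{\beta_{p'}^{-1}\}_{p'\in\Orb}$ is then a right transversal for $\Stab_{[m:n:m]}$ in $B_4$. By the Reidemeister–Schreier rewriting, $\Stab_{[m:n:m]}$ is generated by the loop elements
$$
L(p',\sigma) := \beta_{\sigma\cdot p'}\,\sigma\,\beta_{p'}^{-1}, \qquad \sigma\in\{\sigma_1^{\pm 1},\sigma_2^{\pm 1},\sigma_3^{\pm 1}\},\ p'\in\Orb.
$$
It therefore suffices to show that each such $L(p',\sigma)$ lies, modulo $\mathcal{B}\mathcal{I}_4$, in the group generated by the listed generators. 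Most loops are trivial: away from a branching point of the algorithm one has $\beta_{\sigma\cdot p'}=\beta_{p'}\sigma^{-1}$, so $L(p',\sigma)=e$.

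Nontrivial loops can only arise at the finitely many types of branching points, and I expect the case analysis to produce exactly the generators of the statement. Specifically: the sign ambiguity in Step~1 (applying $\sigma_2$ when $s=0$) yields the loop $\sigma_2$; the termination condition $r_i=t_i$ combined with the projective identification $[m:n:m]=[-m:-n:-m]$ yields the loop $\tau_1\Delta$ for $n\geq 2$, while for $n=1$ the middle-coordinate sign flip becomes trivial on the point $[0:1:0]$ and the two contributions $\tau_1$ and $\Delta$ decouple; the ambiguity $[m:n:m]\leftrightarrow[n-m:n:n-m]$ noted in the remark after the algorithm contributes $\sigma_1\sigma_2^2\sigma_3$ exactly in the borderline case $n=2$, $m=1$ where the two representatives coincide. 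The main obstacle is the bookkeeping at these branching points: one must verify that any loop element produced by another choice of $(p',\sigma)$ reduces, modulo $\mathcal{B}\mathcal{I}_4$, to a product of the three basic loops identified above, which I would handle by induction on the number of steps of the algorithm applied to $p'$.
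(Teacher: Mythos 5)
Your plan is a genuinely different route from the paper's, and the substantive inclusion ``$\subseteq$'' contains a real gap. The paper works directly with the matrix $M=\rho(\beta)$ for $\beta\in\Stab_{[m:n:m]}$: it uses the universal fixed vector $(1,0,1)$ of $\rho(B_4)$, splits into the two cases $M(m,n,m)=(m,n,m)$ and $M(m,n,m)=-(m,n,m)$, and in each case pins down $M$ by pure linear algebra (eigenvector structure, determinant, integrality of the entries $e,g=-2m/n$), obtaining explicitly $M=\rho(\sigma_2^{-x}(\tau_1\Delta)^{a-1})$ in the first case and, in the second, an impossibility for $n\geq 3$ and an explicit product of the extra generator(s) for $n\in\{1,2\}$. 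This is short, direct, and leaves nothing to the reader.

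Your Reidemeister--Schreier plan is sound in outline: the Schreier loop elements $L(p',\sigma)=\beta_{\sigma\cdot p'}\,\sigma\,\beta_{p'}^{-1}$ do generate the stabilizer (your ``right'' should read ``left'' transversal, but the formula is the correct one for left cosets). The gap is that everything after ``Nontrivial loops can only arise at\dots'' is asserted, not proved. Two claims in particular need real work: (a) the Schreier prefix property $\beta_{\sigma\cdot p'}=\beta_{p'}\sigma^{-1}$ away from branching points, which is plausible because the algorithm is memoryless but must be checked across the sign normalizations, the $s=0$ special case, and the grouping of moves into $\sigma_1^{-a}\sigma_3^{c}$ blocks; and (b) the reduction of every exceptional loop modulo $\mathcal{B}\mathcal{I}_4$ to words in the three basic loops. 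Already for $n=1$, taking $p'=[3:2:0]$ gives $\beta_{p'}=\sigma_1^{-1}\sigma_2\sigma_1^{-1}$ while $\sigma_2\cdot p'=[-3:1:0]$ gives $\beta_{\sigma_2\cdot p'}=\sigma_1^{3}$, so $L(p',\sigma_2)=\sigma_1^{3}\sigma_2\sigma_1\sigma_2^{-1}\sigma_1$; this does stabilize $[0:1:0]$, but rewriting it, modulo $\mathcal{B}\mathcal{I}_4$, as a word in $\tau_1,\Delta,\sigma_2$ is exactly the content that needs proving and is not supplied. As written, the induction ``on the number of steps of the algorithm'' is not set up, and without it the proof is incomplete.

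One small slip in the easy direction: $\sigma_1\sigma_2^2\sigma_3$ sends $[m:n:m]$ to $[n-m:n:n-m]$, and these agree in $\PP^2(\QQ)$ iff $n=2m$, i.e. only when $(m,n)=(1,2)$; for $n=1$, $m=0$ one gets $[1:1:1]\neq[0:1:0]$. This does not affect the statement, since you correctly do not include $\sigma_1\sigma_2^2\sigma_3$ among the $n=1$ generators, but the criterion ``$n\mid 2m$'' is not the right one.
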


\begin{proof} For convenience, column vectors of $\mathcal{M}_{3,1}(\QQ)$ will be denoted in line : $(u,v,w)$. Let $\beta \in \mathrm{Stab}_{[m:n:m]}$, and let $M = \rho(\beta)$,
$$ 
M = \begin{pmatrix}
		a & e & b\\
		x & f & y \\
		c & g & d\\
		\end{pmatrix}.
		$$
Since the action of $B_4$ fixes $(1,0,1)$, one has 
$$
b = 1 -a, \qquad
d = 1-c,
\qquad 
y = -x.
$$
By definition, $\beta$ stabilizes $[m:n:m]$, so either $M(m,n,m) = (m,n,m)$, or $M (m,n,m) = (-m,-n,-m)$. \\
~\\
\noindent $\bullet$ First, let us suppose that $M(m,n,m) = (m,n,m)$. Then the eigenspace associated to $1$ has dimension more than $2$, and $(0,1,0) = \frac{1}{n}(m,n,m) - m(1,0,1)$ belongs to this space. Therefore $e = g = 0$ and $f = 1$.\\
\noindent Moreover $\det(M) = 1$ implies that $c = a-1$. The matrix $M$ is then 
$$M = \begin{pmatrix}
		a & 0 & 1-a\\
		x & 1 & -x \\
		a-1 & 0 & 2-a\\
		\end{pmatrix}.$$
		
\noindent Multiplying $\beta$ by $\sigma_2^x$ to the left, we can suppose that $x = 0$. It is straightforward to check that for all $a \in \ZZ$, 
$$
\rho((\tau_1\Delta)^{a-1}) = \begin{pmatrix}
		a & 0 & 1-a\\
		0 & 1 & 0 \\
		a-1 & 0 & 2-a\\
		\end{pmatrix},
		$$

\noindent so the matrix $M$ is in the group generated by $\rho(\sigma_2)$ and $\rho(\tau_1\Delta)$.\\
~\\
\noindent $\bullet$ Now, let us suppose that $M(m,n,m) = (-m,-n,-m)$. This means 
$$
\begin{cases}
ma + ne + m(1-a) = -m\\
nf = -n\\
mc + ng + m(1-c) = -m\\
\end{cases}, \text{ so } e = g = -2\frac{m}{n}, \text{ and } f = -1.
$$
 Moreover, the matrix $M$ has $1$ and $-1$ as eigenvalues, and $\det(M) = 1$, therefore the eigenspace associated to $-1$ must have dimension $2$. Taking the trace, we get $-1 = \Tr(M) = a + f + 1-c$, so $c = a+1$. As before, we can multiply $\beta$ by $\sigma_2^{x}$ to the left to get $x = 0$, so 
$$
M = \begin{pmatrix}
a & -2m/n & 1-a\\
0 & -1 & 0 \\
a+1 & -2m/n & -a\\
\end{pmatrix}.
$$
Notice that $M(0,1,0) = (-2m/n,-1,-2m/n)$, so $\rho(\beta)([0:1:0]) = [2m:n:2m]$. The points $[0:1:0]$ and $[2m:n:2m]$ are in the same orbit only if $n = 2$ or $n = 1$. So if $n \geq 3$, this case is impossible. 

 If $n = 2$, then $m = 1$ and 
$$M = \begin{pmatrix}
a & -1 & 1-a\\
0 & -1 & 0 \\
a+1 & -1 & -a\\
\end{pmatrix} = \rho((\tau_1 \Delta)^{-a-1}\sigma_2(\sigma_1\sigma_2^2\sigma_3)).
$$

Finally, if $n = 1$, then $m = 0$, and 
$$M = \begin{pmatrix}
a & 0 & 1-a\\
0 & -1 & 0 \\
a+1 & 0 & -a\\
\end{pmatrix} = \rho(\tau_1(\tau_1\Delta)^{a+1}),
$$
\noindent and this completes the proof.
\end{proof}

\section{Topology and geometry of orbits}

In this section, we investigate the way the orbits fill the rational projective plane. In particular, we highlight the symmetries of the orbits, that are carried by an affine property of the principal orbit $\cO_1$. Finally, we outline an experimental dominance property of the principal orbit, that distinguish it from the other orbits.

\subsection{Topology of orbits}
Note that the standard embedding into the real projective plane $\PP^2(\QQ)\subset \PP^2(\RR)$
equips $\PP^2(\QQ)$ with the natural topology induced from the Euclidean norm in~$\RR^3$.
We will use this topology to study the question of density of orbits.

\begin{prop} 
\label{DenseProp}
Each orbit, except for the singleton orbit $\{[1:0:1]\}$,  is dense in $\PP^2(\QQ)$.
\end{prop}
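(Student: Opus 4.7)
The plan is to derive density directly from the explicit orbit description given by Theorem~\ref{decomposition_orbits}(ii), which reduces the statement to an elementary Diophantine approximation argument. Denote by $\cO := \Orb_{B_4}([m:n:m])$ a generic non-singleton orbit (with $n \geq 1$; the principal orbit $\cO_1$ corresponds to $n=1$). Theorem~\ref{decomposition_orbits}(ii) characterizes this orbit as the set of points $[r:s:t] \in \PP^2(\QQ)$ whose primitive integer representatives satisfy $\gcd(r-t, s) = n$ and $r \equiv t \equiv \pm m \pmod n$. It therefore suffices to prove that this arithmetically defined set is dense in $\PP^2(\QQ)$ with respect to the topology inherited from $\PP^2(\RR)$.

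Fix any target $p' = [r':s':t'] \in \PP^2(\QQ) \setminus \{[1:0:1]\}$. Assume first that $s' \neq 0$ and work in the affine chart $\{s \neq 0\}$ parametrized by $(x,y) := (r/s,\, t/s)$, in which $p'$ has coordinates $(x^*, y^*) := (r'/s',\, t'/s')$. The approximation scheme is: pick a large prime $N$ with $\gcd(N,n)=1$, set $s := Nn$, and choose integers $r$ and $t$ with $r \equiv t \equiv m \pmod n$ minimizing $|r - sx^*|$ and $|t - sy^*|$. Since the arithmetic progression $\{m + kn \mid k\in\ZZ\}$ has step $n$, this gives $|r/s - x^*| \leq 1/(2N)$ and $|t/s - y^*| \leq 1/(2N)$, so $[r:s:t]$ sits within $O(1/N)$ of $p'$ in the chart.

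Two mild adjustments ensure that $[r:s:t]$ actually lies in $\cO$. First, the orbit condition demands $\gcd((r-t)/n, N) = 1$; since $N$ is prime, this fails only if $N \mid (r-t)/n$, but shifting $r$ by $\pm n$ changes $(r-t)/n$ by $\pm 1$, so at least one of the three candidates $r-n, r, r+n$ satisfies the condition, and perturbs the approximation by at most $1/N$. Second, primitivity $\gcd(r,s,t)=1$ is automatic modulo $n$ (since $r \equiv m$ with $\gcd(m,n)=1$ forces $\gcd(r,n)=1$), and the only remaining possible common factor is $N$, which is excluded as soon as $N \nmid r$; if necessary, one further shift of $r$ by $\pm n$ arranges this. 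Letting $N \to \infty$ produces a sequence of points in $\cO$ converging to $p'$.

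The remaining edge case $s' = 0$ (so that $r' \neq t'$, since we excluded $[1:0:1]$) is handled by the symmetric construction in the chart $\{r \neq 0\}$: setting $s := n$ makes the coprimality condition vacuous, and one approximates the target coordinates $(s/r, t/r) = (0, t'/r')$ by taking $r$ large with $r \equiv m \pmod n$ and $t \equiv m \pmod n$ closest to $r \cdot t'/r'$. The main obstacle is really only bookkeeping around the three simultaneous constraints (residue, coprimality, primitivity); the decisive trick is to take $N$ prime, which collapses each of them to a trivial finite-choice argument and bypasses any need for sieve-theoretic input.
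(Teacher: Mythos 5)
Your argument is correct but takes a genuinely different route from the paper's. The paper exhibits, for each canonical representative $[m:n:m]$, an explicit one-parameter family
$P_k=[\,n_0(km+1)+m_0 : n_0(kn+1) : n_0 km + m_0\,]$
of points in $\Orb_{B_4}([m_0:n_0:m_0])$ converging to $[m:n:m]$, then concludes via the (implicit) observation that each $\rho(\beta)$ acts continuously on $\PP^2(\RR)$, so the orbit also accumulates at $\rho(\beta)([m:n:m])$, hence at every point of $\bigsqcup\Orb_{B_4}([m:n:m])$, which is dense in $\PP^2(\QQ)$. You instead approximate an arbitrary target directly by a Diophantine construction built on the arithmetic characterization of the orbit from Theorem~\ref{decomposition_orbits}(ii). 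The paper's proof is slicker (a single closed formula) but quietly uses the continuity-of-the-action step; yours is self-contained at the cost of managing coprimality and primitivity modulo the auxiliary prime~$N$.

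Two small points should be tightened. First, you exclude $p'=[1:0:1]$ as a target, but density in $\PP^2(\QQ)$ requires the orbit to accumulate at $[1:0:1]$ too; add the one-line remark that any rational $p''\neq[1:0:1]$ arbitrarily near $[1:0:1]$ is itself approximable, so nothing is lost. Second, the two adjustments to $r$ (forcing $N\nmid (r-t)/n$ and then $N\nmid r$) are described as successive shifts, and the second shift could in principle spoil the first. Say instead that among the five candidates $r+kn$ with $k\in\{-2,\dots,2\}$, the residues of $(r+kn-t)/n$ and of $r+kn$ modulo $N$ each take five distinct values (using $\gcd(n,N)=1$ and $N\geq 5$), so at most two candidates are excluded and at least one $k$ satisfies both conditions simultaneously, still with error $O(1/N)$. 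With these clarifications the proof is complete.
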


\begin{proof}
Let us show that the orbit of $[m_0:n_0:m_0]$ is dense, for $n_0 \in \NN^*$ and coprime $m_0 ,n_0$   such that $m_0 < n_0$. 
It suffices to check that for each representative $[m:n:m]$ with $m<n/2$, there exists a sequence of points in $\Orb_{B_4}([m_0:n_0:m_0])$ converging to $[m:n:m]$. 

Consider the following sequence of points in $\PP^2(\QQ)$
$$ 
P_k := 
\left[ m + \frac{n_0+m_0-m}{n_0k+1} : 
n + \frac{n_0-n}{n_0k+1} : 
m + \frac{m_0-m}{n_0k+1} \right] 
\underset{k\rightarrow +\infty}{\longrightarrow} [m:n:m].
$$
\noindent For all $k\in \NN^*$, 
\begin{align*}
P_k &= \left[(n_0k+1)m +n_0+m_0-m : (n_0k+1)n + n_0 - n : (n_0k+1)m + m_0-m\right] \\[4pt]
&= [n_0(km+1) + m_0 : n_0(kn+1) : n_0km + m_0],
\end{align*}

\noindent with $n_0(km+1) + m_0$, $n_0(kn+1)$, $n_0km + m_0$ mutually prime because $m_0$ and $n_0$ are coprime. And 
$$\gcd(n_0(km+1)+m_0 - (n_0km +m_0), n_0(kn+1)) = \gcd(n_0,n_0(kn+1)) = n_0,$$  
\noindent therefore the point $P_k$ is in the orbit of $[m_0:n_0:m_0]$, for all $k\in \NN^*$.
\end{proof}

Proposition~\ref{DenseProp} implies that the orbits (except for the singleton orbit $\{[1 : 0 : 1]\}$) are neither closed nor open.

\subsection{Symmetries of orbits}
Identifying the plane $\QQ^2$ and the subset of points $[x:y:1] \in \PP^2(\QQ)$, we show that every orbit is stable by an action of the dihedral group~$D_4$.

\begin{prop}
The orbits are stable under the action of the dihedral group $D_4$ acting on the plane with origin placed at $(1,0) = [1:0:1]$.
\end{prop}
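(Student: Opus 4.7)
The plan is to identify two generators of $D_4$, express them as projective transformations of $\PP^2(\QQ)$, and verify that each preserves the two arithmetic invariants of Lemma~\ref{invariant}. By Theorem~\ref{decomposition_orbits}(ii), preservation of these invariants is equivalent to preservation of $B_4$-orbits, so this will suffice.

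Concretely, I take $R$ to be the $90^\circ$ counter-clockwise rotation around $(1,0)$ and $F$ the reflection across the horizontal axis through $(1,0)$. In the affine chart $(x,y)=[x:y:1]$, a routine computation shows they extend to projective automorphisms of all of $\PP^2(\QQ)$ (including the line at infinity $\{t=0\}$) given by
$$R\colon[r:s:t]\longmapsto[t-s:r-t:t],\qquad F\colon[r:s:t]\longmapsto[r:-s:t].$$
A quick check confirms $R^4=F^2=\id$ and $FRF=R^{-1}$, so $\langle R,F\rangle\cong D_4$. One should also note that if $(r,s,t)$ are mutually coprime then so are $(t-s,r-t,t)$ and $(r,-s,t)$, so both maps preserve the integer-homogeneous-coordinates normalization used throughout the paper.

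Next I would verify that $R$ and $F$ each preserve the invariants $n=\gcd(r-t,s)$ and the class $r\bmod n$ (up to sign). For $F$ this is immediate. For $R$, the new coordinates $(r',s',t')=(t-s,r-t,t)$ satisfy $r'-t'=-s$ and $s'=r-t$, so $\gcd(r'-t',s')=\gcd(s,r-t)=n$; and since $n\mid s$ by definition of $n$, the residue $r'=t-s\equiv t\equiv\pm m\pmod n$. By Theorem~\ref{decomposition_orbits}(ii) this forces $R$ and $F$ each to stabilize every orbit, and hence so does all of $D_4$. The singleton orbit $\{[1:0:1]\}$ is trivially fixed pointwise since $(1,0)$ is the center of the action. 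The argument presents no real obstacle: the only point worth a moment's attention is verifying that the affine isometries $R$ and $F$ extend cleanly to well-defined projective transformations on the whole of $\PP^2(\QQ)$, which is automatic because both are affine-linear.
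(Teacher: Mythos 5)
Your proof is correct and follows the same basic strategy as the paper: pick the rotation and reflection generators of $D_4$, compute their action, and verify they preserve the orbit invariants $n=\gcd(r-t,s)$ and $r\bmod n$. Your execution is cleaner than the paper's, though: you work directly in homogeneous coordinates $[r:s:t]$ and invoke Lemma~\ref{invariant}/Theorem~\ref{decomposition_orbits}(ii), whereas the paper passes through the affine chart with irreducible fractions $x=a/b$, $y=c/d$ and a $\gcd(b,d)$ bookkeeping that is harder to follow; you also explicitly verify the $D_4$ relations and that the maps are well defined on all of $\PP^2(\QQ)$, which the paper leaves implicit.
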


\begin{proof}
The action of the dihedral group $D_4$ is generated by the rotation of $\pi/2$ around the point $(1,0) = [1:0:1]$ and the symmetry with respect to the horizontal axis. Therefore, it is enough to check that for every point $[x:y:1]$, the image by the rotation $[-y+1:x-1:1]$ and the image by the symmetry $[x:-y:1]$ are in the same orbit as $[x:y:1]$.\\
~\\
Let $x = \frac{a}{b}$ and $y = \frac{c}{d}$, be irreducible fractions, and let $\delta = \gcd(b,d)$, with $b= \delta b'$, $d= \delta d'$ so that $[x:y:1] = [ad' : b'c : d'b]$ with $ad'$, $b'c$ and $d'b$ mutually prime. Then $[x:y:1]$ is in the orbit of the representative $[m:n:m]$ with $n = \gcd(a-b,c)$ and $m \equiv ad' \mod(n)$.\\
~\\
\noindent Now $[x:-y:1] = [ad':-b'c:d'b]$ so it is also in the orbit of $[m:n:m]$.\\
~\\
\noindent Similarly, $[-y+1:x-1:1] = [-bc'+bd':ad'-bd':bd']$ with $\gcd(bc',ad'-bd') = n$ and $-bc'+bd' \equiv bd' \equiv m \mod(n)$, so the point is again in the same orbit.
\end{proof}

\subsection{Affine lines in $\cO_1$}
The principal orbit $\cO_1$ has one particular property : among all orbits, it is the only that contains infinitely many straight lines of $\PP^2(\QQ)$,
see Figure~\ref{dessin}.

\begin{figure}[H]
\centering
\includegraphics[scale=0.43]{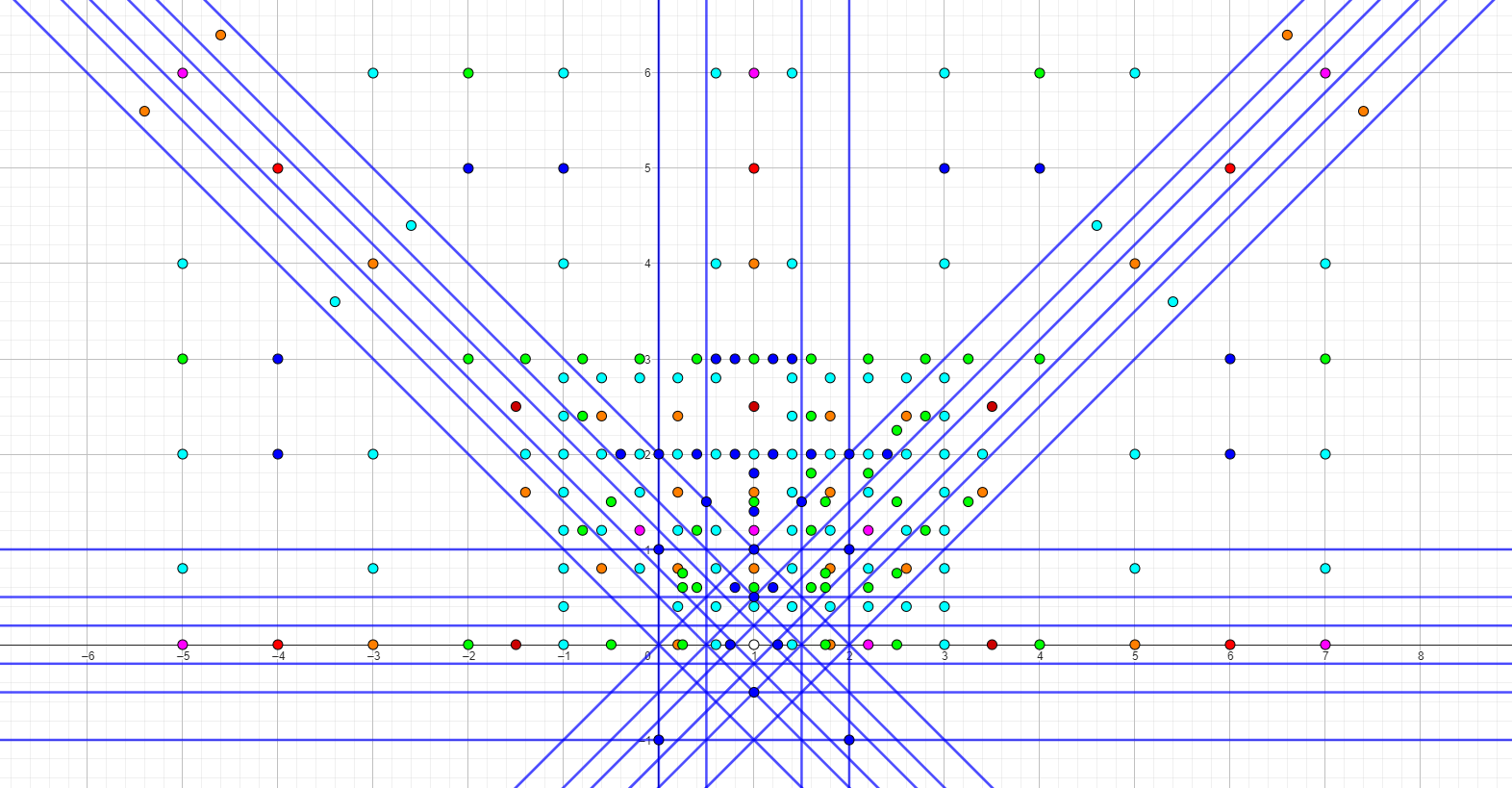}
\caption{Sketch of the orbits in  $\PP^2(\QQ)$ with the affine chart $[x:y:1]$. 
Each color represents an orbit, dark blue is for the principal orbit.}
\label{dessin}
\end{figure}

\begin{prop}
\label{LineProp}
Let $r/s \in \QQ$. The affine line of $\QQ^2$ having slope $r/s$ and passing through the point $(0,c/d)$ is entirely in $\cO_1$ if and only if 
$$cs_1 + rd_1 = \pm 1$$
\noindent with $s_1 = s/(s\wedge d)$ and $d_1 = d/(s\wedge d)$.\\
~\\
\noindent Moreover, the only vertical lines that are in $\cO_1$ are those of the form
$$D = \left\lbrace \left(\frac{r\pm 1}{r},\lambda \right),~\lambda \in \QQ \right\rbrace ~\text{ with } r\in \NN^*.$$
\end{prop}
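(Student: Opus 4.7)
My plan is to translate the characterization $\cO_1=\{[X:Y:Z]\mid\gcd(X-Z,Y)=1\}$ into a clean coprimality condition on a linear equation for the line, then show that a single integer $N$ appearing as the ``constant term'' of that equation controls whether the line lies in $\cO_1$. Assuming $\gcd(r,s)=\gcd(c,d)=1$ and writing $e=\gcd(s,d)$, $s=es_1$, $d=ed_1$ with $\gcd(s_1,d_1)=1$, I would first put the line $y=(r/s)x+c/d$ into the projective form $rdX-sdY+csZ=0$ and simplify by the gcd of the coefficients, which equals $e$, to obtain the primitive equation $rd_1X-sd_1Y+cs_1Z=0$. Changing variables to $U=X-Z$, $V=Y$, $W=Z$ preserves $\gcd(X,Y,Z)=\gcd(U,V,W)$ and yields
$$
rd_1U-sd_1V+NW=0, \qquad N:=rd_1+cs_1,
$$
turning the $\cO_1$-condition into the clean statement $\gcd(U,V)=1$.

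The sufficiency direction ($|N|=1\Rightarrow L\subset\cO_1$) is then immediate: if a prime $p$ divides both $U$ and $V$ in a coprime integer triple on the line, then $p\mid NW$, hence $p\mid W$ (as $p\nmid N$), contradicting coprimality. For the necessity direction I plan to argue by cases. If $N=0$, substituting $(X,Y,Z)=(1,0,1)$ into the primitive equation gives exactly $N$, so the line passes through the fixed point $[1:0:1]\notin\cO_1$. For $|N|\geq 2$, the idea is to exhibit a bad point by specialization: setting $U=0$ produces the coprime projective point $[0:N/h:sd_1/h]$ with $h=\gcd(N,sd_1)$, whose $\gcd(U,V)$ equals $|N|/h$; symmetrically $V=0$ produces a coprime point with $\gcd(U,V)=|N|/\gcd(N,rd_1)$. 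Two structural gcd identities $\gcd(N,s_1)=1$ and $\gcd(N,d_1)=1$ (which follow by expanding $N=rd_1+cs_1$ together with the coprimalities $\gcd(r,s)=\gcd(c,d)=\gcd(s_1,d_1)=1$) simplify these to the divisibilities $|N|\mid e$ and $|N|\mid r$ respectively. Since $\gcd(e,r)$ divides $\gcd(s,r)=1$, at least one of the two must fail whenever $|N|\geq 2$, so one of the candidates is a genuine bad point.

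The vertical case $x=a/b$ (with $\gcd(a,b)=1$) runs along parallel lines: the projective equation $bX-aZ=0$ becomes $bU+(b-a)W=0$ in the new coordinates, and its coprime integer solutions are parametrized by $(U,V,W)=((a-b)k,V,bk)$ with $\gcd(k,V)=1$; the $\cO_1$-condition reduces to $\gcd(a-b,V)=1$ for every admissible $V$, which holds for all $V$ if and only if $|a-b|=1$, giving the announced form $x=(b\pm 1)/b$ with $b\in\NN^*$. The main obstacle I foresee is the necessity direction for slanted lines: one has to produce an actual point on the line that fails $\cO_1$ rather than argue abstractly, and the double specialization $U=0$, $V=0$ does exactly this, pushing the obstruction into the two incompatible divisibilities $|N|\mid e$ and $|N|\mid r$. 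The lone edge case is $r=0$ (horizontal lines), where the $V=0$ witness degenerates to the point at infinity and provides no information, but the $U=0$ witness alone still recovers the condition $|c|=1$.
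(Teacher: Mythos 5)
Your proof is correct, and the necessity direction takes a genuinely different route from the paper's. Both arguments begin at the same place: the intersection of the line with $x=1$ (your $U=0$ specialization; the paper's choice $\lambda=\alpha/\beta=1$) yields the divisibility $N\mid ds_1$. After that they diverge. The paper closes with a single non-obvious choice of parameter $\alpha=(c+d)s_1$, $\beta=(s-r)d_1$, verifies $\gcd(\alpha,\beta)=1$ by a small gcd computation, and evaluates the membership criterion at this point to force $|N|=1$ in one stroke. You instead push $N\mid ds_1$ down to $N\mid e$ via the lemmas $\gcd(N,s_1)=\gcd(N,d_1)=1$, then take the second natural intersection $V=0$ (the $x$-axis) to obtain $N\mid rd_1$, hence $N\mid r$, and conclude from $e\wedge r\mid s\wedge r=1$. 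Your preliminary change of coordinates $U=X-Z$, $V=Y$, $W=Z$ is also a useful normalization: it makes the $\cO_1$-criterion simply $\gcd(U,V)=1$ in primitive coordinates, turns sufficiency into a one-line prime argument (the paper goes through a longer gcd manipulation there), dispatches $N=0$ painlessly by showing $[1:0:1]$ lies on the line, and isolates the degenerate slope $r=0$ as an edge case in which the $U=0$ witness alone already gives $|N|=1$, a case the paper does not explicitly single out. The vertical-line case is handled essentially the same way in both arguments.
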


\begin{proof}
Let $c/d \in \QQ$ in irreducible form. As $(0,c/d) \mapsto [0:c:d]$ with $\gcd(c,d) = 1$, the point $(0,c/d)$ is already in $\cO_1$. \\
\noindent Let us suppose that the line passing through $(0,c/d)$ and of slope $r/s$ is in $\cO_1$. Then for all $\lambda\in \QQ$, the point $(\lambda, \lambda r/s + c/d)$ is in $\cO_1$. Writing $\lambda = \frac{\alpha}{\beta}$ with $\alpha$ and $\beta$ coprime, we have
$$\left[\lambda:\frac{r}{s}\lambda + \frac{c}{d} : 1 \right] = [\alpha ds : r\alpha d + c\beta s: \beta ds ] = [\alpha ds_1 : r\alpha d_1 + c\beta s_1 : \beta ds_1],$$

\noindent where $d_1 = d/(d\wedge s) $ and $s_1 = s/(d\wedge s)$. As the three coordinates of this point are not necassarily mutually prime, the fact that the point is in $\cO_1$ means that
$$\gcd(\alpha ds_1,\alpha rd_1 + \beta cs_1,ds_1\beta) = \gcd(ds_1(\alpha -\beta),\alpha rd_1 + \beta cs_1).$$ 
\noindent or, because $\alpha$ and $\beta$ are coprime, 
$$\gcd(ds_1,\alpha rd_1 + \beta cs_1) = \gcd(ds_1(\alpha -\beta),\alpha rd_1 + \beta cs_1).$$ 
 
\noindent In particular, if we take $\alpha = \beta = 1$, then we get that $rd_1 + cs_1 $ divides $ds_1$.\\
\noindent Note that it means that $r\wedge c$ divides $d$ or $s$, so actually $r\wedge c = 1$.\\
~\\
\noindent Now take $\alpha = (c+d)s_1$ and $\beta = (s-r)d_1$, so that $\alpha rd_1 + \beta cs_1 =  (rd_1 + cs_1)ds_1$. We get
$$ds_1 = \pm (rd_1 + cs_1)ds_1,$$ 
\noindent so $rd_1 + cs_1 = \pm 1$. \\
\noindent For this argument to be valid, we need to check that the $\alpha$ and $\beta$ chosen above are coprime. We check that $\beta$ is coprime to $\alpha - \beta$, which is equivalent.
\begin{align*}
\gcd(\alpha-\beta,\beta) &= \gcd(rd_1 + cs_1,(s-r)d_1)\\
&= \gcd(rd_1 + cs_1,s-r) \text{ because } d_1 \wedge cs_1 = 1\\
&= \gcd(rd_1+ cs_1 , r) \text{ because } rd_1 + cs_1 \text{ divides } s\\
&= \gcd(cs_1,r) \\
& = 1,
\end{align*}
\noindent this concludes the argument.\\
~\\
\noindent Conversely, let us suppose that $rd_1+cs_1 = \pm 1$. Let $\lambda = \alpha/\beta$ in irreducible form. \\
\noindent Let us check the criteria for the point $[\alpha ds : r\alpha d + c\beta s : \beta ds] = [\alpha ds_1 : r\alpha d_1 + c\beta s_1 : \beta ds_1]$ to be in $\cO_1$. First note that the greatest common divisor of $\alpha ds_1$, $\beta ds_1$ and $r\alpha d_1 + c\beta s_1$ is $\gcd(ds_1,r\alpha d_1+ c\beta s_1)$.
One has
\begin{align*}
\gcd(\alpha ds_1 - \beta ds_1, r\alpha d_1 + c\beta s_1 ) &= \gcd((\alpha - \beta)ds_1, rd_1(\alpha - \beta) \pm \beta) \text{ ~~because } rd_1+cs_1 = \pm 1\\
&= \gcd(ds_1, rd_1(\alpha - \beta) \pm \beta) \\
&= \gcd(ds_1, r\alpha d_1 + c\beta s_1). \\
\end{align*}

\noindent Therefore $[\alpha ds_1 : r\alpha d_1 + c\beta s_1 : \beta ds_1]$ is in $\cO_1$, and the entire line is in the principal orbit.\\
~\\
\noindent For the vertical lines, let us suppose that the line $\{(a/b,\lambda)\}$ is in $\cO_1$. Then the point $[a/b:0:1] = [a:0:b]$ is in $\cO_1$, so $\gcd(a-b,0) = 1$, meaning that $a-b = \pm 1$. 

Conversely, it is straightforward to check that the line $\{((r\pm 1)/r,\lambda)\}$ is entierly in $\cO_1$. 
Proposition~\ref{LineProp} is proved.
\end{proof}

\subsection{Experimental statistics: asymptotical growth of orbits}

Let us explain why in a certain sense, the principal orbit is much bigger than the others.
Although we have no precise statement, computer experimentations clearly demonstrate this phenomenon.

\begin{defi}
Let $A$ be a subset of $\QQ_{>0}^2$. Let $d \in \NN^*$. Let $S_d$ be the set of positive rational with numerator and denominator lower that $d$ (when written in irreducible form).
We say that $A$ has rational density $\alpha$ when :
$$\frac{\sharp (A\cap S_d^2)}{\sharp S_d^2} \underset{d\rightarrow +\infty}{\longrightarrow} \alpha.$$
\end{defi}

\begin{conj}
The principal orbit $\cO_1$ has a rational density greater than $0.75$.
\end{conj}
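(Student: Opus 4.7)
The plan is to reduce the conjecture to a clean density statement, compute the limiting density as a convergent Euler product, and bound that product numerically below $3/4$.

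First I would rewrite membership in $\cO_1$ in a form adapted to the affine chart $[x:y:1]$. Writing $x=a/b$ and $y=c/d$ in irreducible form and reducing $[ad:bc:bd]$ by $g:=\gcd(b,d)$ to coprime coordinates $[ad_1:b_1c:gb_1d_1]$ (where $b=gb_1,\ d=gd_1$), a short computation gives $r-t=d_1(a-b)$ and $s=b_1c$. Simplifying the $\gcd$ using $\gcd(d_1,b_1)=\gcd(c,d_1)=\gcd(a,b_1)=1$ yields the clean characterization
$$
[a/b:c/d:1]\in\cO_1 \iff \gcd(a-b,c)=1,
$$
in which the denominator $d$ of $y$ plays no role. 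The conjecture becomes an assertion about a probability on coprime integer quadruples.

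Next I would compute the rational density $\alpha$ as a convergent Euler product by a Möbius sieve. Expanding $\mathbf{1}_{\gcd(a-b,c)=1}=\sum_e\mu(e)\mathbf{1}_{e\mid a-b}\mathbf{1}_{e\mid c}$ and using independence of the two factors of $S_d\times S_d$ reduces the computation to two residue-class densities. A standard coprime-pair lattice-point count gives, as $d\to\infty$,
$$
\frac{\#\{(a,b)\in[1,d]^2:\gcd(a,b)=1,\ e\mid a-b\}}{\#\{(a,b)\in[1,d]^2:\gcd(a,b)=1\}}\longrightarrow \frac{1}{e}\prod_{p\mid e}\frac{p}{p+1},
$$
with the same limit for the condition $e\mid c$ on the second coordinate. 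Absolute convergence of $\sum_e\mu(e)^2/e^2=\prod_p(1+1/p^2)$ allows one to swap $\lim_d$ and $\sum_e$ (truncating $e$ at some $d^{1/2}$ and absorbing the tail into a $o(1)$ error), yielding
$$
\alpha=\sum_{e\ \text{squarefree}}\mu(e)\prod_{p\mid e}\frac{1}{(p+1)^2}=\prod_p\left(1-\frac{1}{(p+1)^2}\right).
$$

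Finally I would bound $\alpha$ from below numerically. Direct computation over the primes $p\leq 31$ gives $\prod_{p\leq 31}\tfrac{p(p+2)}{(p+1)^2}>0.7805$, and for the tail one uses the elementary inequality $\prod(1-a_n)\geq 1-\sum a_n$:
$$
\prod_{p>31}\left(1-\frac{1}{(p+1)^2}\right)\geq 1-\sum_{n>31}\frac{1}{(n+1)^2}>1-\int_{31}^{\infty}\frac{dx}{(x+1)^2}=\frac{31}{32}.
$$
Multiplying gives $\alpha>0.7805\cdot\tfrac{31}{32}>0.756>\tfrac{3}{4}$. The principal obstacle lies in the Möbius step: making the swap of $\lim_d$ and $\sum_e$ rigorous with an explicit error bound uniform in $e$, so that the contribution of large $e$ can be controlled. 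All ingredients are classical analytic number theory, but the truncation requires care. Note also that the margin is genuinely tight---the true value of $\alpha$ is approximately $0.776$, only about $0.026$ above $3/4$, so one cannot truncate the Euler product too early.
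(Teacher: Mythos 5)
This statement is a conjecture in the paper, not a theorem. The authors give only numerical evidence (computation of the partial density $\sharp(\cO_1\cap S_d^2)/\sharp S_d^2$ for $d\le 500$, Figure~\ref{densite}), and explicitly say in the discussion section that ``This experimental fact still remains to be proven.'' So there is no proof in the paper to compare against; what you propose would, if completed, do strictly more than the paper: it would establish the conjecture and, better, identify the exact limiting density.

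Your outline is sound. The reduction $[a/b:c/d:1]\in\cO_1\iff\gcd(a-b,c)=1$ (with $a/b$, $c/d$ in lowest terms) is correct: one checks $\gcd(r-t,s)=\gcd(d_1(a-b),cb_1)=\gcd(a-b,c)$ using $\gcd(d_1,cb_1)=1$ and $\gcd(a-b,b_1)=1$, and the observation that the denominator of $y$ drops out is a genuine simplification the paper does not record. The residue densities $\prod_{p\mid e}(p+1)^{-1}$ for squarefree $e$ and the resulting Euler product $\alpha=\prod_p\bigl(1-(p+1)^{-2}\bigr)$ check out; numerically $\alpha\approx 0.7754$ (your quoted $0.776$ is a shade high, but this does not affect the bound), and the explicit estimate $\prod_{p\le 31}>0.7805$ together with $\prod_{p>31}\ge 1-\sum_{n>31}(n+1)^{-2}>\tfrac{31}{32}$ gives $\alpha>0.756>3/4$, as needed. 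The gap you flag---justifying the interchange of $\lim_{d}$ and $\sum_e$---is real but routine: for fixed $D$ the Möbius sum truncates \emph{exactly} at $e\le D$ (since $e\mid c$ with $1\le c\le D$ forces $e\le D$; this also covers the degenerate case $a=b=1$, $a-b=0$, because $\gcd(0,c)=c$), and the standard coprime-lattice-point estimate $\#\{(c,d)\in[1,D]^2:\gcd(c,d)=1,\ e\mid c\}=\tfrac{6}{\pi^2}\tfrac{D^2}{e}\prod_{p\mid e}\tfrac{p}{p+1}+O\bigl(\tfrac{D}{e}\,\tau(e)\log D\bigr)$ (and its analogue for $e\mid a-b$) yields a total error of order $D^{3+\epsilon}=o(D^4)$ after summing over $e\le D$. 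Filling in that bookkeeping would turn the paper's conjecture into a theorem, with the sharper statement that the rational density of $\cO_1$ exists and equals $\prod_p\bigl(1-(p+1)^{-2}\bigr)$.
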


\begin{rem}
As the natural density for positive integers, this rational density is a way to state the visual intuition that the principal orbit takes the major part of the projective plane (around $3/4$ of the plane). This definition of rational density was inspired by the work done in \cite{Lynch_2022}.

The following conjecture relies on the exact computation of the values $\sharp(\cO_1 \cap S_d^2)/\sharp S_d^2$ for large $d$ (until $d = 500$), see Figure \ref{densite}.
\begin{figure}[H]
\centering
\includegraphics[scale=0.6]{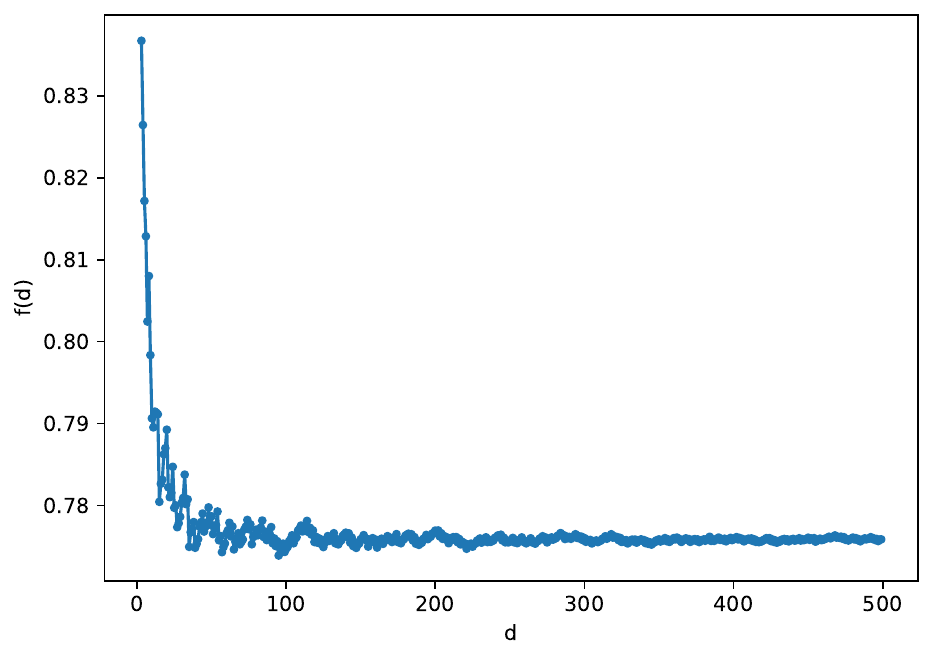}
\caption{Partial rational density of $\cO_1$ for $d = 1,\ldots, 500$.}
\label{densite}
\end{figure}
\end{rem}

\section{Quantization of $\PP^2(\QQ)$}

Now we can come back to the Burau representation $\rho_q$ where $q$ a formal variable. The study of the case $q=1$ in the previous section motivates us to focus on the principal orbit.

\subsection{Quantizing the principal orbit}

\begin{defi}
By analogy with the case where $q = 1$, let us denote by $\cO_q$ the orbit of the point $[0:1:0] \in \PP^2(\ZZ(q))$, under the action of $B_4$ via the reduced Burau representation $\rho_q$, and let us call it the quantized principal orbit.
\end{defi}

\begin{rem}
With this definition, a point $[r:s:t] \in \cO_1$ has infinitely many quantizations, depending on the braid chosen to connect $[r:s:t]$ to $[0:1:0]$.

Given a braid $\beta$ such that $\rho(\beta)([0:1:0]) = [r:s:t]$, the quantizations of the point $[r:s:t]$ are reached by $\rho_q(\beta \Stab_{[0:1:0]})([0:1:0])$. Yet, the generators computed in Proposition \ref{StabConj} have a trivial action on $[0:1:0]$ via $\rho_q$, as
$$\rho_q(\sigma_2) = \begin{pmatrix}
					1 & 0 & 0\\[4pt]
					-q & q & 1\\[4pt]
					0 & 0 & 1\\
					\end{pmatrix} \text{ , }
					\quad \rho_q(\tau_1) = \begin{pmatrix}
-q^3 & 0 & q+1\\[4pt]
0 & -q^3 & -q^2+1\\[4pt]
0 & 0 & 1\\		
\end{pmatrix} \text{ , } 
\quad \rho_q(\Delta) = \begin{pmatrix}
0 & 0 & q\\[4pt]
0 & -q^2 & 0\\[4pt]
q^3 & 0 & 0\\
\end{pmatrix}.$$
\noindent Therefore, the split of the point $[r:s:t]$ in several quantizations actually comes from the braid Torelli group $\mathcal{B}\mathcal{I}_4$.
For instance the braid $\sigma_3\tau_1^2\sigma_3^{-1}$ is quantized as 
$$\rho_q(\sigma_3\tau_1^2\sigma_3^{-1}) = \begin{pmatrix}
q^6 & q^5 + q^4 - q^2 - q  & -q^5 - q^4 + q^2 + q\\[4pt]
0 & q^4 + q^3 - q & q^6 - q^4 - q^3 + q\\[4pt]
0 & q^4 + q^3 - q - 1 & q^6 - q^4 - q^3 + q + 1\\
\end{pmatrix}.$$
\end{rem}

\begin{defi}
The quantization map is a set-valued function defined by 
$$
\begin{array}{c c c c}
\cQ : & \cO_1 & \longrightarrow & \mathcal{P}(\PP^2(\overline{\Lambda}))\\
	 & p & \longmapsto & \{ \rho_q(\beta)([0:1:0]) ~|~ \beta \text{ s.t. } \rho(\beta)([0:1:0]) = p\}\\
\end{array}.
$$
\noindent For $p\in \cO_1$, the image $\mathcal{Q}(p)$ is called the \textit{quantization} of $p$, and an element $[R:S:T]$ of $\mathcal{Q}(p)$ will be called a \textit{deformation} of $p$.\\
\end{defi}

\begin{rem} According to the previous remark, for $p\in \cO_1$, if we denote by $\beta_{p}$ the braid provided by the braided Euclidean algorithm such that $\rho(\beta_{p})([0:1:0]) = p$, we have
$$
\cQ(p) = \{\rho_q(\beta_p \gamma)([0:1:0])~|~\gamma \in \mathcal{B}\mathcal{I}_4\}.
$$
\end{rem}

\begin{ex}
Let us quantize the point $[7:18:14]$. 

(i) Thanks to the Braided Euclidean algorithm, we compute the braid $\beta = \sigma_2^2\sigma_1\sigma_3^{-3}\sigma_2^{-3}\sigma_1^3\sigma_3^{-2}$ such that $\rho(\beta)([0:1:0]) = [7:18:14]$. Then 

$$\rho_q(\beta)([0:1:0]) = [R(q):S(q):T(q)], \text{ with }$$
~\\
\noindent $R(q) = q^9 + 2q^8 + 3q^7 + 3q^6 + q^5 - q^4 - q^3 - q^2$ ,\\[4pt]
\noindent $S(q) = -q^{11} - 2q^{10} - 2q^9 + q^8 + 6q^7 + 11q^6 + 10q^5 + 4q^4 - q^3 - 4q^2 - 3q - 1$ ,\\[4pt]
\noindent $T(q) = q^8 + 3q^7 + 6q^6 + 6q^5 + 3q^4 - 2q^2 - 2q - 1.$\\
~\\
\noindent Note that these polynomials have positive and negative coefficients, and the positive (resp. the negative) parts are unimodal. \\

(ii) Let us also compute $\rho_q(\beta \sigma_3\tau_1^2\sigma_3^{-1})([0:1:0]) = [R'(q):S'(q):T'(q)]$,  \\
~\\
\noindent $R'(q) = q^{15} + 2q^{14} + 3q^{13} + 3q^{12} - 3q^{10} - 3q^9 + 3q^7 + 3q^6 + q^5 - q^4 - q^3 - q^2$ ,\\[4pt]
\noindent $S'(q) = -q^{17} - 2q^{16} - 2q^{15} + q^{14} + 7q^{13} + 13q^{12} + 11q^{11} + q^{10} - 9q^9 - 10q^8 - 2q^7 + 7q^6 + 9q^5 + 4q^4 - q^3 - 4q^2 - 3q - 1$,\\[4pt]
\noindent $T'(q) = q^{14} + 3q^{13} + 6q^{12} + 6q^{11} + 2q^{10} - 3q^9 - 5q^8 - 2q^7 + 3q^6 + 5q^5 + 3q^4 - 2q^2 - 2q - 1.$
~\\

\noindent Again, these polynomials have positive and negative parts, and each part is unimodal.
\end{ex}

\subsection{Proof of Theorem~\ref{QuantThm}}\label{QuantThmSec}
Let $r$ and $s$ be two coprime integers, with $s>0$, and such that 
$$\cQ([r:s]) = [R(q):S(q)]_q,$$ with $R(q)$, $S(q)$ two coprime polynomials.

Let us apply the braided Euclidean algorithm to the point $[r:s:0]$.\\
\noindent As the third coordinate of the point is zero, the algorithm follows exactly the steps of the usual Euclidean algorithm, so the braid provided is 
$$
\beta = \sigma_1^{-a_{2k+1}} \sigma_2^{a_{2k}} \cdots \sigma_1^{-a_3}\sigma_2^{a_2}\sigma_1^{-a_1},
$$
\noindent with the $a_i$'s being the coefficients of the continued fraction :
$$
\frac{r}{s} = a_1 + \frac{1}{a_2 + \frac{1}{\ddots + \frac{1}{a_{2k+1}}}} = [a_1,a_2,\cdots,a_{2k+1}].
$$ 
\noindent One of the deformations of $[r:s:0]$ is $\rho_q(\beta^{-1})\cdot [0:1:0]$.
$$\rho_q(\beta^{-1}) = \rho_q(\sigma_1)^{a_1}\rho_q(\sigma_2)^{-a_2} \cdots \rho_q(\sigma_1)^{a_{2k+1}} = \begin{pmatrix}
								M_q & \star \\
								0 & 1\\
								\end{pmatrix},$$
								
\noindent where $M_q$ is the image of the $3$-strands braid $\beta$ by the Burau representation of $B_3$, because of the following commutative diagramme.

\begin{center}
\begin{tikzpicture} [on grid,semithick, inner sep=2pt,bend angle=45]

\node (A) at(0,1.5) {$B_3$};
\node (B) at(3.2,1.5) {$\GL_2(\Lambda)$};
\node (C) at(0,0) {$B_4$};
\node (D) at(3.3,0) {$\GL_3(\Lambda)$};

\node (E) at(1.6,1.8) {$\rho_q$};
\node (F) at(1.6,-0.3) {$\rho_q$};

\draw (-0.8,0.7) node {$\sigma_i\mapsto \sigma_i$};
\draw (4.6,0.7) node {$M \mapsto \begin{pmatrix}
M & 0\\
0 & 1\\
\end{pmatrix}$};

\draw [thick,
         arrows={Hooks[right]->}]
    (A) edge (C)
    (B) edge (3.2,0.3);

\draw[thick,arrows={->}]
	(A) edge (B)
	(C) edge (D);

\end{tikzpicture}
\end{center}

\noindent As a $3$-strands braid, $\beta$  sends $[0:1]$ on $[r:s]$, so the second column of $M_q$ is the unique deformation of $[r:s]$ in the sense of \cite{MGO}, that is $(R(q),S(q))$, and thus $[R(q):S(q):0]$ is among the deformations of the point $[r:s:0]$.\\
~\\
The case of the other natural embedding $[r:s] \mapsto [0:r:s]$ is exactly symmetric. 

Theorem~\ref{QuantThm} is proved.

\subsection{Special specializations}

The obstruction to the unicity of quantization comes from the fact that the inclusion $\ker(\rho_q) \subset \mathcal{B}\mathcal{I}_4$ is strict (the braid Torelli group is ``too big''). We investigate specializations of $q$ at complex values for which the reverse inclusion holds.

\begin{defi}
For $z\in \CC^*$, let $\ev_z : \PGL_3(\bar{\Lambda}) \longrightarrow \PGL_3(\CC)$ be the evaluation map at $q = z$. 
\end{defi}

\begin{nota}
Let $j := \e^{\frac{2i\pi}{3}}$ be one of the third roots of unity.
\end{nota}

\begin{lemme}
The only complex values $z$ for which $\mathcal{B}\mathcal{I}_4 \subset \ker(\ev_z \rho_q)$ are $1,-1,j,j^2$.
\end{lemme}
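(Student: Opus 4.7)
The plan is to exploit Smythe's result already recalled in the paper: the braid Torelli group $\mathcal{B}\mathcal{I}_4$ is the normal closure in $B_4$ of the three elements $\tau_1^2$, $\tau_3^2$, and $\Delta^2$. Because $\ker(\ev_z \rho_q)$ is automatically normal in $B_4$, the inclusion $\mathcal{B}\mathcal{I}_4 \subset \ker(\ev_z \rho_q)$ holds if and only if the three matrices $\ev_z \rho_q(\tau_1^2)$, $\ev_z \rho_q(\tau_3^2)$ and $\ev_z \rho_q(\Delta^2)$ are scalar (i.e.\ trivial in $\PGL_3(\CC)$). So the whole proof reduces to computing these three matrices as polynomial matrices in $q$ and intersecting the loci where each is scalar.

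The case of $\Delta^2$ is painless: the antidiagonal matrix $\rho_q(\Delta)$ given in the preceding remark squares to $q^4 \Id$, which is scalar for every nonzero $q$. Hence $\Delta^2$ imposes no condition on $z$ at all.

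From the explicit form of $\rho_q(\tau_1)$ already displayed in the paper, a direct squaring yields
$$
\rho_q(\tau_1^2) = \begin{pmatrix} q^6 & 0 & (q+1)(1-q^3) \\ 0 & q^6 & (q^3-1)(q^2-1) \\ 0 & 0 & 1 \end{pmatrix}.
$$
This matrix is scalar if and only if the three equations $q^6 = 1$, $(q+1)(1-q^3) = 0$ and $(q^3-1)(q^2-1) = 0$ all hold. The last two conditions alone already force $q \in \{1, -1, j, j^2\}$, and these four values automatically satisfy $q^6 = 1$, so this set is exactly the locus where $\rho_q(\tau_1^2)$ is scalar. A parallel computation for $\tau_3 = (\sigma_3\sigma_2\sigma_3)^2$ — which I would carry out directly rather than invoke a symmetry, since the sign conventions of $\rho_q$ do not turn $\sigma_1 \leftrightarrow \sigma_3$ into a literal automorphism — produces a matrix of the same shape, with diagonal $1, q^6, q^6$ and off-diagonal factors of the form $(q^3-q)(1-q^3)$ and $q^2(q+1)(1-q^3)$, cutting out the same set $\{1, -1, j, j^2\}$.

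Combining the three computations finishes the argument. For $z \notin \{1, -1, j, j^2\}$, the matrix $\ev_z \rho_q(\tau_1^2)$ is not scalar, so the element $\tau_1^2 \in \mathcal{B}\mathcal{I}_4$ is already outside $\ker(\ev_z \rho_q)$ and the inclusion fails. Conversely, at each of the four listed values all three of $\rho_q(\Delta^2)$, $\rho_q(\tau_1^2)$, $\rho_q(\tau_3^2)$ become scalar, and by normality of $\ker(\ev_z \rho_q)$ the whole normal closure $\mathcal{B}\mathcal{I}_4$ lands in the kernel. The only slightly delicate step is the factoring that simultaneously satisfies $q^6 = 1$ and the vanishing of the off-diagonal polynomials; the rest is straightforward matrix bookkeeping on top of Smythe's normal-generation theorem.
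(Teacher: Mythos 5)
Your proof is correct and follows essentially the same route as the paper: reduce to checking whether $\ev_z\rho_q$ kills the three Smythe normal generators $\tau_1^2$, $\tau_3^2$, $\Delta^2$ (using normality of the kernel), observe $\rho_q(\Delta^2)=q^4\Id$ is always scalar, and read off the condition $z\in\{1,-1,j,j^2\}$ from the vanishing locus of the off-diagonal entries of $\rho_q(\tau_1^2)$ and $\rho_q(\tau_3^2)$, which factor as $(q\pm1)$-type factors times $q^2+q+1$. The paper also records the same computations (stated for $\rho_q(\tau_1^2)^k$ and $\rho_q(\tau_3^2)^k$, but the $k=1$ case is what is actually used), so no genuinely new idea appears on either side.
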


\begin{proof}
Recall that $\mathcal{B}\mathcal{I}_4$ is normally generated by $\tau_1^2$, $\tau_3^2$ and $\Delta^2$ defined in (\ref{GenKer}), and that $\rho_q(\Delta^2) = q^4I$. \\
\noindent We can compute for all $k\in \NN$, 
$$
\rho_q(\tau_1^2)^k = \begin{pmatrix}
q^{6k} & 0 & [k]_{q^6}f_1(q)\\[4pt]
0 & q^{6k} & [k]_{q^6}f_2(q)\\[4pt]
0 & 0 & 1\\
\end{pmatrix}, \text{ with } \begin{cases}
							f_1(q) = -q^4-q^3+q+1 = -(q-1)(q+1)(q^2+q+1)\\[4pt]
							f_2(q) = q^5-q^3-q^2+1 = (q-1)^2(q+1)(q^2+q+1)\\
							\end{cases}.
$$
\noindent Likewise, 
$$
\rho_q(\tau_3^2)^k = \begin{pmatrix}
1 & 0 & 0\\[4pt]
-[k]_{q^6}g_1(q) & q^{6k} & 0\\[4pt]
-[k]_{q^6}g_2(q) & 0 & q^{6k}\\
\end{pmatrix}, \text{ with } \begin{cases}
							g_1(q) = q^6-q^4-q^3+q = q(q-1)^2(q+1)(q^2+q+1)\\[4pt]
							g_2(q) = q^6+q^5-q^3-q^2 = -q^2(q-1)(q+1)(q^2+q+1)\\
							\end{cases}.
$$

\noindent If $q = z \in \{1,-1,j,j^2\}$, then $\ev_z(\rho_q(\tau_1^2))= \ev_z(\rho_z(\tau_3^2)) = I$, so the whole normal group generated by $\tau_1^2$, $\tau_3^2$ and $\Delta^2$ is in $\ker(\ev_z \rho_q)$.\\
~\\
\noindent Conversely, if $z \notin \{1,-1,j,j^2\}$, then $f_1(z) \neq 0$ so $\ev_z(\rho_q(\tau_1^2)) \neq I$, and $\mathcal{B}\mathcal{I}_4 \nsubseteq \ker(\ev_z\rho_q)$.
\end{proof}

\begin{rem}
For $q = 1$, the statement is obvious, as $\ker(\ev_1 \rho_q) = \ker(\rho) = \mathcal{B}\mathcal{I}_4$ by definition. For $q = -1$, we have $\ker(\ev_{-1}\rho_q) = P_4$, the pure braid group with $4$ strands.

\noindent For $q = z$ a primitive third root of unity, the inclusion $\mathcal{B}\mathcal{I}_4 \subset \ker(\ev_z\rho_q)$ is strict. For instance, $\ev_z(\rho_q(\sigma_1^3)) = I$.
\end{rem}

\begin{defi}
Let $p = [r:s:t]\in \cO_1$. The $j$-deformed point associated to $p$ is defined by 
$$[r:s:t]_j := \ev_j(\rho_q(\beta))([0:1:0]),$$
\noindent for any $\beta\in B_4$ such that $\rho(\beta)([0:1:0]) = p$.
\end{defi}

\begin{ex} Let $p = [1:5:3]$. The braid $\beta = \sigma_2^{2}\sigma_1\sigma_3^{-3}$ satisfies $\rho(\beta)([0:1:0]) = p$. Then the $j$-analogue of $p$ is 
$$[1:5:3]_j = [1:-j:0].$$
\end{ex}

\begin{rem} During experimentations, we noticed that for every point $[r:s:t] \in \cO_1$ we looked at, the $j$-analogue $[R:S:T] = [r:s:t]_j$ satisfies that $R$ and $T$ are either 0 or invertible in $\ZZ[j]$ (that is $\cN(R) \leq 1 $, $\cN(T) \leq 1$) and that $\cN(S) \in \{0,1,3\}$, where $\cN$ denotes the norm of the ring of integers $\ZZ[j]$.
\end{rem}

\subsection{Minimal unimodal quantization}

The aim of this paragraph is to choose one particular deformation of a point $p \in \cO_1$ among the quantization $\mathcal{Q}(p)$.

\begin{defi}
Let $R(q) \in \ZZ[q]$. We say that $R$ is piecewise unimodal when its sequence of coefficients $(a_0,a_1,\cdots,a_n)$ is divided in subsequences of alternatively positive and negative coefficients $(+\abs{a_0},...,+\abs{a_{i_1}})$ , $(-\abs{a_{i_1+1}},\cdots, -\abs{a_{i_2}})$ , ..., each subsequence being unimodal.\\ 
\noindent Let $[R:S:T] \in \PP^2(\bar{\Lambda})$, renormalized such that $R,S$ and $T$ are polynomials in $q$. We say that $[R:S:T]$ is fully piecewise unimodal when $R$, $S$ and $T$ are piecewise unimodal.
\end{defi}

\begin{ex}
The deformation of $[3:6:4]$ obtained via the braided Euclidean algorithm is 
$$[q^5 + q^4 + q^3,
 -q^{10} - 2q^9 - 2q^8 - 2q^7 - q^6 + q^5 + 3q^4 + 4q^3 + 3q^2 + 2q + 1,
 q^3 + q^2 + q + 1].$$
 
\noindent This deformation is fully piecewise unimodal. In particular, in the second coordinate the sequence of coefficients is $(1,2,3,4,3,1,-1,-2,-2,-2,-1)$, so it has two pieces and each is unimodal.
\end{ex}

\begin{rem}
Some deformations of points in the principal orbit are not fully piecewise unimodal. For instance, in $\mathcal{Q}([21:29:11])$, there is $[R(q):S(q):T(q)]$ with
$$S(q) = q^{13} + 3q^{12} + 6q^{11} + 8q^{10} + 8q^{9} + 5q^{8} + 2q^{7} + q^{6} + 2q^{5} + 2q^{4} - q^{3} - 3q^{2} - 3q - 2,$$
\noindent which is not piecewise unimodal.
\end{rem}

\begin{conj}
Let $p \in \cO_1$. There is a unique deformation $[R:S:T]$ of $p$ in $\mathcal{Q}(p)$ such that $\deg(R)$, $\deg(S)$ and $\deg(T)$ are minimal.
\end{conj}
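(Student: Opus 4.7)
The conjecture asks for the existence and uniqueness of a coordinatewise-minimal element in the $\rho_q(\mathcal{B}\mathcal{I}_4)$-orbit of the base deformation $\rho_q(\beta_p)([0:1:0])$. By the remark preceding the conjecture, $\cQ(p)$ is exactly this orbit, since the other generators of $\Stab_{[0:1:0]}$ (namely $\sigma_2$, $\tau_1$, $\Delta$) act projectively trivially on $[0:1:0]$ via $\rho_q$. My plan is to prove the conjecture by: (a) establishing a structural statement about how left multiplication by $\rho_q(\gamma)$ with $\gamma \in \mathcal{B}\mathcal{I}_4$ modifies the degree triple $d([R:S:T]) = (\deg R,\, \deg S,\, \deg T)$ after normalization so that $\gcd(R,S,T) = 1$ in $\ZZ[q]$, and (b) exhibiting a descent procedure that brings any deformation to a canonical minimum.

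For existence, I would use the explicit matrix formulas for $\rho_q(\tau_1^{2k})$, $\rho_q(\tau_3^{2k})$ and $\rho_q(\Delta^2) = q^4 I$ computed in the previous subsection, which show that the non-trivial contributions involve factors of the form $(q-1)(q+1)(q^2+q+1)$ together with quantum integers $[k]_{q^6}$. A degree-counting argument, inducting on the word length of $\gamma$ expressed as a product of $B_4$-conjugates of $\tau_1^{\pm 2}$ and $\tau_3^{\pm 2}$, should yield the descent property: if a deformation is not coordinatewise minimal, then some non-trivial $\rho_q(\gamma)$ strictly decreases $\max(\deg R,\, \deg S,\, \deg T)$. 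Well-foundedness of $\NN$ then forces the descent to terminate at a deformation that simultaneously minimizes all three degrees.

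For uniqueness, suppose $v_1, v_2 \in \cQ(p)$ both realize the coordinatewise minimum. Then $v_2 = \rho_q(\gamma) v_1$ for some $\gamma \in \mathcal{B}\mathcal{I}_4$. If $\rho_q(\gamma)$ is projectively trivial then $v_1 = v_2$; otherwise the descent principle forces $\rho_q(\gamma)$ to strictly raise at least one coordinate degree, contradicting the minimality of $v_2$. Crucially, this argument does not require an explicit description of $\ker(\rho_q)$ and is therefore independent of the still-open faithfulness problem for the Burau representation $\rho_q$.

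The main obstacle will be the structural step (a): one needs fine control of degrees under the action of a general element of the normal closure of $\{\tau_1^2, \tau_3^2, \Delta^2\}$, not merely of the generators themselves. Conjugation by arbitrary $B_4$-words can send $[0:1:0]$ to points with high-degree coordinates, and products of conjugates may produce cancellations that invalidate naive degree bounds. A promising route is to track the Newton polygons of $R$, $S$, $T$ simultaneously and to exploit the fact that $\rho_q(\tau_i^2) - I$ carries a universal factor $(q-1)(q+1)(q^2+q+1)$, which guarantees a per-generator degree increment of at least six. An alternative strategy is to refine the braided Euclidean algorithm so that its output is provably the minimal-degree deformation, from which uniqueness would follow once one verifies independence from the tie-breaking choices in the algorithm.
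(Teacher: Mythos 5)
The statement you are attempting to prove is explicitly a \emph{conjecture} in the paper; the author gives no proof at all, only experimental evidence from computing thousands of deformations and observing that a coordinatewise-minimal one always appears to exist and to be unique. So there is no ``paper's proof'' to compare against, and any complete argument you could supply would be new mathematics rather than a reconstruction.

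As a proposed proof, your sketch has real gaps beyond the one you flag yourself. First, the existence part conflates two distinct notions of minimality. A descent argument on $\max(\deg R,\deg S,\deg T)$, driven by well-foundedness of $\NN$, would at best produce a deformation minimizing that maximum; it does not automatically produce one that \emph{simultaneously} attains the minimum of each of $\deg R$, $\deg S$, $\deg T$ taken separately, which is what the conjecture asserts. A priori those three infima could be realized by three different elements of $\cQ(p)$, and showing they are not is precisely the nontrivial content. Second, the uniqueness step as written is circular: you assert that a nontrivial $\rho_q(\gamma)$ applied to a minimal deformation ``must strictly raise at least one coordinate degree,'' but nothing in your degree-counting lemma (which concerns conjugates of $\tau_1^{\pm2},\tau_3^{\pm2}$ and their products, not an arbitrary element mapping one minimum to another) establishes this. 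Cancellation in products of conjugates, which you correctly identify as the obstacle in step (a), equally undermines this claim; the ``universal factor'' $(q-1)(q+1)(q^2+q+1)$ only gives a degree increment for a \emph{single} generator applied to the base point $[0:1:0]$, not for arbitrary words acting on arbitrary deformations. In short, the approach is a reasonable first idea and matches the spirit of the paper's experimental observations, but it does not constitute a proof, and the paper itself does not claim one.
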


\begin{defi}
Assuming the conjecture above, we can define \underline{the} quantization of a point $p \in \cO_1$ to be the minimal (in degrees) deformation of $p$.\\
\noindent If $p = [r:s:t]$, we denote this minimal deformation by $[r:s:t]_q$.
\end{defi}

\begin{rem}
This definition would match with the quantization of the projective rational line embedded in $\PP^2(\QQ)$, because if $[r:s]_q = [R:S]$, then $[R:S:0]$ is minimal by unicity of the quantization of $[r:s]$. Moreover in this case, the deformation of $[r:s:0]$ is fully piecewise unimodal. 
\end{rem}

\subsection{Examples and experimentations}

To support our conjecture, we sum up some of the examples we computed.

\begin{ex}[Non unimodality] Using the braided Euclidean algorithm, we computed deformations of $[r:s:t]$ for all the triplets of nonnegative integers $(r,s,t)$ (satisfying the condition $(r-t) \wedge s = 1$ to be in $\cO_1$) bounded by $100$. In this set of examples, only 1518 (over 302172) were not fully piecewise unimodular, the first one (for the lexicographic order) occuring for the triplet $[10:67:3]$, for which the polynomials are\\
~\\
\noindent $R(q) = -q^{15} - 2q^{14} - q^{13} + q^{12} + 4q^{11} + 5q^{10} + 3q^{9} + q^{8}$,\\[4pt]
\noindent $S(q) = -q^{15} - 3q^{14} - 4q^{13} - 3q^{12} + q^{11} + 6q^{10} + 8q^{9} + 8q^{8} + 7q^{7} + 8q^{6} + 9q^{5} + 10q^{4} + 9q^{3} + 7q^{2} + 4q + 1$,\\[4pt]
\noindent $T(q) = q^{10} + q^{9} + q^{8}.$\\
~\\
\noindent For every point whose deformation using the braided Euclidean algorithm was not fully piecewise unimodal, we found an other deformation that is fully piecewise unimodal, by applying variations of the braided Euclidean algorithm. For instance, the braid $\beta = \sigma_2^{-10}\sigma_1^{-3}\sigma_3\sigma_2^2\sigma_1^{-1}$ satisfies $\rho(\beta)([0:1:0]) = [10:67:3]$, and we have $\rho_q(\beta)([0:1:0]) = [R'(q):S'(q):T'(q)]$ with \\
~\\
\noindent $R'(q) = -q^{14} - 2q^{13} - 3q^{12} - 2q^{11} - q^{10} - q^{9}$,\\[4pt]
\noindent $S'(q) = q^{15} + q^{14} - 3q^{12} - 5q^{11} - 6q^{10} - 7q^{9} - 7q^{8} - 7q^{7} - 7q^{6} - 7q^{5} - 7q^{4} - 6q^{3} - 4q^{2} - 2q - 1$,\\[4pt]
\noindent $T'(q) = -q^{16} - q^{15} - q^{14},$\\
~\\
\noindent which are piecewise unimodal. However, we can find a third deformation of $[10:67:3]$ such that the degrees of the three coordinates are together minimal. With $\beta'' = \sigma_2^{-9}\sigma_1^2\sigma_2^3\sigma_1^2\sigma_3^{-3}\sigma_2\sigma_3\tau_1^2\tau_3(\sigma_2\sigma_3)^2$, we get $\rho_q(\beta'') = [R'':S'':T'']$, with \\
~\\
\noindent $R''(q) = q^{12} + 2q^{11} + 3q^{10} + 3q^{9} + q^{8}$,\\[4pt]
\noindent $S''(q) = q^{12} + 3q^{11} + 6q^{10} + 8q^{9} + 8q^{8} + 7q^{7} + 7q^{6} + 7q^{5} + 7q^{4} + 6q^{3} + 4q^{2} + 2q + 1$,\\[4pt]
\noindent $T''(q) = q^{10} + q^{9} + q^{8}.$\\
\end{ex}

\begin{ex}[Minimality of degrees] Let us focus on three significative examples, the points $[2:1:1]$, $[3:1:5]$ and $[21:29:11]$. For these three points, we computed many different deformations in their quantizations.\\
\noindent Indeed, for a given point $p$, one can compute the braid given by the braided Euclidean algorithm, $\beta_p$, and then multiply by any element of $\mathcal{B}\mathcal{I}_4$ to get an other deformation of the point.\\
~\\
\noindent Let us denote by $ \sigma(a,b,c)$ the braid $\sigma_1^a\sigma_2^b\sigma_3^c$ with $a,b,c\in \ZZ$.\\
\noindent For each point $p$, we computed the deformations corresponding to the braids $\beta_p \gamma\tau\gamma^{-1}$, for $\tau \in \{\tau_1^2,\tau_1^2\tau_3^2\}$ and $\gamma \in \{\sigma(a_1,b_1,c_1)\sigma(a_2,b_2,c_2)\hdots \sigma(a_N,b_N,c_N) ~|~-4 \leq a_i,b_i,c_i < 4,~0 < N < 3 \}$. These parameters were chosen according to the computation power of our computer. With this method, we reach $16 514$ braids. \\
~\\
\noindent $\bullet$ Example 1 : $p = [2:1:1]$. The braided Euclidean algorithm gives $\beta_p = \sigma_1^2\sigma_3^{-1}$, and the corresponding deformation is $[q+1:1:1]$.\\
\noindent Among the $16 514$ deformations we looked at, the deformation $[q+1:1:1]$ is minimal in degrees. There was $3522$ non fully piecewise unimodal deformations, the minimal one (in degrees) being
\begin{equation*}
\begin{split}
R(q) = &-q^{20} - 4q^{19} - 8q^{18} - 10q^{17} - 7q^{16} + 7q^{14} + 10q^{13} + 11q^{12} + 12q^{11} + 11q^{10} + 5q^{9} - 3q^{8} - 7q^{7} \\
&- 6q^{6} - 2q^{5} - q^{4} - 2q^{3} - 2q^{2} - q,\\
\end{split}
\end{equation*}

$S(q) = -q^{19} - 3q^{18} - 5q^{17} - 5q^{16} - 2q^{15} + 2q^{14} + 4q^{13} + 5q^{12} + 7q^{11} + 9q^{10} + 7q^{9} - 5q^{7} - 6q^{6} - 2q^{5} - q^{3} - 2q^{2} - q,$

\begin{equation*}
\begin{split}
T(q) = &-q^{19} - 4q^{18} - 8q^{17} - 10q^{16} - 7q^{15} + 6q^{13} + 9q^{12} + 11q^{11} + 13q^{10} + 12q^{9} + 5q^{8} - 3q^{7} - 8q^{6} \\
&- 6q^{5} - 2q^{4} - q^{3} - 2q^{2} - 2q - 1.\\
\end{split}
\end{equation*}

\noindent $\bullet$ Example 2 : $p = [3:1:5]$. We applied the same protocole to this second point. Here the braided Euclidean algorithm returns $\beta_p = \sigma_1^{3}\sigma_3^{-5}$, leading to the deformation $[q^6 + q^5 + q^4:q^4:q^4 + q^3 + q^2 + q + 1]$.\\
\noindent There was $2737$ non fully piecewise unimodal deformations.\\
\noindent However, we found that the braid $\beta' = \sigma_1^3\sigma_3^{-5}\sigma_2\sigma_3\tau_1^2\tau_3^2(\sigma_2\sigma_3)^2$ provides polynomials of lower degrees than with $\beta_p$, indeed 
$$\rho_q(\beta')([0:1:0]) = [q^4 + 2q^3 + q^2 - 1: q^3 + q^2 - 1: q^3 + 2q^2 + 2q].$$
\noindent Therefore even if the braided Euclidean algorithm is efficient, it is not always the most efficient. It seams that $[q^4 + 2q^3 + q^2 - 1:q^3 + q^2 - 1:q^3 + 2q^2 + 2q]$ is the minimal deformation for $[3:1:5]$. \\
~\\
\noindent $\bullet$ Example 3 : $p = [21:29:11]$. In this example, the braided Euclidean algorithm leads to a non fully piecewise unimodal deformation of $p$. In this example, there are $2219$ non fully piecewise unimodal deformations. \\
\noindent The minimal deformation seams to be the one given by the braided Euclidean algorithm. The lowest deformation we found that is fully piecewise unimodal is\\[6pt]
\noindent $R(q) = q^{14} + 2q^{13} + 4q^{12} + 5q^{11} + 5q^{10} + 3q^{9} + q^{8} + q^{6} + 2q^{5} + q^{4} - q^{3} - 2q^{2} - q$\\[6pt]
\noindent $S(q) = q^{14} + 3q^{13} + 6q^{12} + 8q^{11} + 8q^{10} + 5q^{9} + q^{8} - q^{7} + q^{6} + 4q^{5} + 3q^{4} - q^{3} - 4q^{2} - 4q - 1$\\[6pt]
\noindent $T(q) = q^{12} + 2q^{11} + 3q^{10} + 3q^{9} + 2q^{8} - q^{6} + q^{4} + q^{3} - q$.\\
\end{ex}

\section{Discussion: open problems and future prospects} \label{XXX}

Several open problems and conjectures were spotted during experimentations with both the action by $\rho$ and $\rho_q$ on the projective rational plane.
In this section, we briefly discuss the general status of the subject.

\subsection{Non-uniqueness of quantization} 
In geometrical context, quantization usually leads to an extension of the quantized space. 
For instance, in geometric quantization the initial symplectic manifold increases its dimension by one and becomes a contact manifold.
The canonical choice of the quantized rational numbers in~\cite{MGO}
is due to the fact that quantized space (the projective line) is one-dimensional in this case.
However, even in this situation the quantization is not unique:
the second, ``left'' quantization was developed in~\cite{Bapat2022}.
Non-uniqueness was also observed in the complex case; see~\cite{ovsienko_towards_2021}.

The choice of a canonical representative of a quantized point $p\in\PP^2(\QQ)$ is a challenging problem.

\subsection{Distribution of orbits} The exact structure of the orbits of the action of $B_4$ on the rational projective plane is still mysterious. Based on large computations, we conjecture that the principal orbit takes around $3/4$ of the plane, in the sense of Proposition~\ref{DenseProp}. This experimental fact still remains to be proven. 

Moreover, what happen for the other orbits is unknown, even if we suspect that the rational density decreases when $n$ grows (where an orbit is represented by a point $[m:n:m]$). In particular, it would be interesting to study more precisely the group of symmetries for each orbit.

\subsection{Specialization at roots of unity} The evaluation of $q$ at a primitive third root of unity $j$ is a particularly simple situation thanks to the inclusion $\mathcal{B}\mathcal{I}_4 \subset \ker(\ev_j \rho_q)$. It could be worth investigating wether evualuations at other roots of unity is far from the case of the third roots. The link between the theory of $q$-rationals and the Burau representation of $B_3$ was used with success in \cite{morier-genoud_burau_2024} to study the faithfulness of specializations of this representation. For the braid group $B_4$, we hope that our quantization could perform the same types of progress. The kernels of the specializations of Burau representation at roots of unity was studied in \cite{Funar} and \cite{Dlugie2022}, following a paper of Squier \cite{Squier_1984}.

\subsection{Almost unimodality} Our computations of quantized points of the projective rational plane suggested that a large majority of the deformations are fully piecewise unimodal. We lack an explanation for this phenomenon, but it could be the sign that some deformations are better than others. The next step would be to find a combinatorial interpretation of these deformations, where unimodality would be derived from the combinatorial model.

\section*{Acknowledgements}

I am grateful to my advisor Sophie Morier-Genoud and to Valentin Ovsienko for introducing me to this topic and for all the helpful discussions we had on this problem.

This work is supported by the CDSN scolarship from the Ministère de l'Enseignement Supérieur et de la Recherche, distributed by the École Normale Supérieure de Rennes.

\newpage

\bibliographystyle{alpha}
\bibliography{biblio_B4}

\end{document}